\newtheorem{theorem}{\textsf{Theorem}}
\newtheorem{lemma}{\textsf{Lemma}}
\newtheorem{proposition}{\textsf{Proposition}}
\newenvironment{proof}[1][\textsf{Proof. }]{\textbf{#1}}{$\square$}
\newcommand{\fn}[1]{\footnote{\hangpara{3em}{1} #1}}
\begin{document}

\title{
\date{}
{
\large \textsf{\textbf{Volume of a doubly truncated hyperbolic tetrahedron}}
}
}
\author{\small Alexander Kolpakov\fn{supported by the Schweizerischer Nationalfonds SNF no.~200021-131967/1} \hspace*{42pt}
\small Jun Murakami\fn{supported by Grant-in-Aid for Scientific Research no.~22540236
}}
\maketitle

\begin{abstract}\noindent

The present paper regards the volume function of a doubly truncated hyperbolic tetrahedron. Starting from the previous results of J.~Murakami, U.~Yano and A.~Ushijima, we have developed a unified approach to express the volume in different geometric cases via dilogarithm functions and to treat properly the many analytic strata of the latter. Finally, several numeric examples are given.

\medskip
{\textsf{\textbf{Key words}}: hyperbolic tetrahedron, Gram matrix, volume formula.}
\end{abstract}

\parindent=0pt

\section{Introduction}

The real vector space $\mathbb{R}^{1,n}$ of dimension $n+1$ with the Lorentzian inner product $\langle x, y\rangle = -x_0 y_0 + x_1y_1 + \dots + x_n y_n$, where $x = (x_0, x_1, \dots, x_n)$ and $y = (y_0, y_1, \dots, y_n)$, is called an $(n+1)$-dimensional Lorentzian space $\mathbb{E}^{1,n}$.

Consider the two-fold hyperboloid $\mathscr{H} = \{x \in \mathbb{E}^{1,n} | \langle x, x \rangle = -1\}$ and its upper sheet $\mathscr{H}^{+} = \{x \in \mathbb{E}^{1,n} | \langle x, x \rangle = -1, x_0 > 0\}$. The restriction of the quadratic form induced by the Lorentzian inner product $\langle \circ, \circ \rangle$ to the tangent space to $\mathscr{H}^{+}$ is positive definite, and so it gives a Riemannian metric on $\mathscr{H}^{+}$. The space $\mathscr{H}^{+}$ equipped with this metric is called \textit{the hyperboloid model} of the $n$-dimensional hyperbolic space and denoted by $\mathbb{H}^n$. The hyperbolic distance $d(x,y)$ between two points $x$ and $y$ with respect to this metric is given by the formula $\cosh d = -\langle x,y \rangle$.

Consider the cone $\mathscr{K} = \{x \in \mathbb{E}^{1,n} | \langle x, x \rangle = 0\}$ and its upper half $\mathscr{K}^{+} = \{x \in \mathbb{E}^{1,n} | \langle x, x \rangle = 0, x_0 > 0\}$. A ray in $\mathscr{K}^{+}$ emanating from the origin corresponds to a point on the ideal boundary of $\mathbb{H}^n$. The set of such rays forms a sphere at infinity $\mathbb{S}^{n-1}_{\infty} \cong \partial \mathbb{H}^n$. Thus, each ray in $\mathscr{K}^{+}$ becomes an ideal point of $\overline{\mathbb{H}^n} = \mathbb{H}^n \cup \partial \mathbb{H}^n$.

Let $p$ denote the radial projection of $\mathbb{E}^{1,n} \setminus \{x \in \mathbb{E}^{1,n} | x_0 = 0\}$ onto the affine hyperplane $\mathbb{P}^n_1 = \{x \in \mathbb{E}^{1,n} | x_0 = 1\}$ along a ray emanating from the origin~$\mathbf{o}$. The projection $p$ is a diffeomorphism of $\mathbb{H}^n$ onto the open $n$-dimensional unit ball $\mathbb{B}^n$ in $\mathbb{P}^n_1$ centred at $(1, 0, 0, \dots, 0)$ which defines a projective model of $\mathbb{H}^n$. The affine hyperplane $\mathbb{P}^n_1$ includes not only $\mathbb{B}^n$ and its set-theoretic boundary $\partial \mathbb{B}^n$ in $\mathbb{P}^n_1$, which is canonically identified with $\mathbb{S}^{n-1}_{\infty}$, but also the exterior of \textit{the compactified projective model} $\overline{\mathbb{B}^n} = \mathbb{B}^n \cup \partial \mathbb{B}^n \cong \mathbb{H}^n \cup \mathbb{S}^{n-1}_{\infty}$. Let $\mathrm{Ext}\, \mathbb{B}^n$ denote the exterior of $\overline{\mathbb{B}^n}$ in $\mathbb{P}^n$. Thus $p$ could be naturally extended to a map from $\mathbb{E}^{1,n}\setminus \mathbf{o}$ onto an $n$-dimensional real projective space $\mathbb{P}^n = \mathbb{P}^n_1 \cup \mathbb{P}^n_{\infty}$, where $\mathbb{P}^n_{\infty}$ is the set of straight lines in the affine hyperplane $\{x \in \mathbb{E}^{1,n} | x_0 = 0\}$ passing through the origin. 

Consider the one-fold hyperboloid $\mathscr{H}_{\star} = \{x \in \mathbb{E}^{1,n} | \langle x, x \rangle = 1\}$. Given some point $u$ in $\mathscr{H}_{\star}$ define in $\mathbb{E}^{1,n}$ the half-space $\mathrm{R}_u = \{x \in \mathbb{E}^{1,n} | \langle x, u \rangle \leq 0\}$ and the hyperplane $\mathrm{P}_u = \{x \in \mathbb{E}^{1,n} | \langle x, u \rangle = 0\} = \partial \mathrm{R}_u$. Denote by $\Gamma_u$ (respectively $\Pi_u$) the intersection of $\mathrm{R}_u$ (respectively $\mathrm{P}_u$) with $\mathbb{B}^n$. Then $\Pi_u$ is a geodesic hyperplane in $\mathbb{H}^n$, and the correspondence between the points in $\mathcal{H}_{\star}$ and the half-space $\Gamma_u$ in $\mathbb{H}^n$ is bijective. Call the vector $u$ normal to the hyperplane $\mathrm{P}_u$ (or $\Pi_u$).

Let $v$ be a point in $\mathrm{Ext}\, \mathbb{B}^n$. Then $p^{-1}(v)\cap \mathscr{H}_{\star}$ consists of two points. Let $\tilde{v}$ denote one of them, so we may define \textit{the polar hyperplane} $\Pi_{\tilde{v}}$ to $v$, independent on the choice of $\tilde{v} \in p^{-1}(v)\cap \mathscr{H}_{\star}$.

Now we descend to dimension $n=3$. Let $T$ be a tetrahedron in $\mathbb{P}^3_1$, that is a convex hull of four points $\{v_i\}^{4}_{i=1} \subset \mathbb{P}^3_1$. We say a vertex $v \in \{v_i\}^{4}_{i=1}$ to be \textit{proper} if $v \in \mathbb{B}^3$, to be \textit{ideal} if $v \in \partial \mathbb{B}^3$ and to be \textit{ultra-ideal} if $v\in \mathrm{Ext}\, \mathbb{B}^3$. 

Let $v$ be an ultra-ideal vertex of $T$. We call \textit{a truncation of} $v$ the operation of
removing the pyramid with apex $v$ and base $\Pi_{\tilde{v}}\cap T$. A \textit{generalised} hyperbolic tetrahedron $T$ is a polyhedron, possibly non-compact, of finite volume in the hyperbolic space obtained from a certain tetrahedron by polar truncation of its ultra-ideal vertices. In case when only two vertices are truncated, we call such a generalised tetrahedron \textit{doubly truncated}. 

Depending on the dihedral angles, the polar hyperplanes may or may not intersect. In Fig.~\ref{tetrahedron0} a tetrahedron $T$ with two truncated vertices is depicted. The corresponding polar planes do not intersect. We shall call this kind of generalised tetrahedron \textit{mildly truncated}. In Fig.~\ref{tetrahedron} the case when the polar planes intersect is shown. Here the tetrahedron $T$ is truncated down to a prism, hence we shall call it \textit{prism truncated}.

In this paper we study the volume function of a doubly truncated hyperbolic tetrahedron. The question arises first in the paper by R.~Kellerhals \cite{Kellerhals}, where a doubly truncated hyperbolic orthoscheme is considered. The whole evolution of an orthoscheme, starting from a mildly truncated one down to a Lambert cube has been investigated. 

The case of a general hyperbolic tetrahedron was considered in numerous papers \cite{ChoKim, DM, MY}. The case of a mildly truncated tetrahedron is due to J.~Murakami and A.~Ushijima \cite{MU, Ushijima}. The paper \cite{Ushijima} is the starting point where the question about intense truncations of a hyperbolic tetrahedron was posed. Thus, the case of a prism truncated tetrahedron remains unattended. As we shall see later, it brings some essential difficulties. First, the structure of the volume formula should change, as first observed in \cite{Kellerhals}. Second, the branching properties of the volume function come into sight. This phenomenon was first observed for tetrahedra in the spherical space and is usually related to the use of the dilogarithm function or its analogues, see \cite{KMP, MU, Murakami}.

For the rest of the paper, a mildly (doubly) truncated tetrahedron is given in Fig.~\ref{tetrahedron0}. Its dihedral angles are $\theta_k$ and its corresponding edge lengths are $\ell_k$, $k=\overline{1,6}$\footnote{for the given integers $n\geq 1$, $m\geq 0$, the notation $k=\overline{n,n+m}$ means that $k\in \{n, n+1, \dots, n+m\}$.}. 

\begin{figure}[ht]
\begin{center}
\includegraphics* [totalheight=6cm]{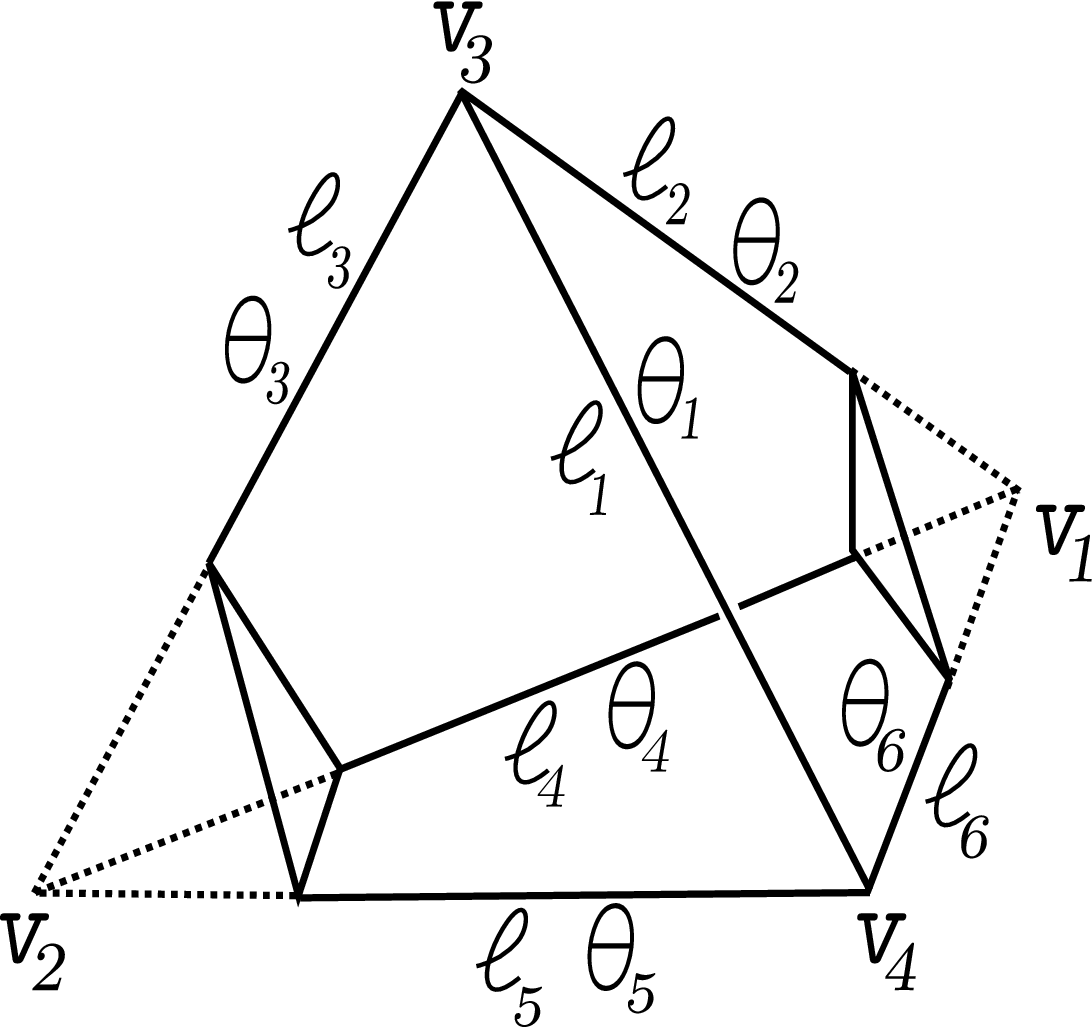}
\end{center}
\caption{Mildly (doubly) truncated tetrahedron} \label{tetrahedron0}
\end{figure}

A prism truncated tetrahedron is given in Fig.~\ref{tetrahedron}. The dashed edge connects the ultra-ideal vertices and corresponds to the edge $\ell_4$ in the previous case. The dihedral angles remain the same, except that the altitude $\ell$ of the prism replaces the dihedral angle $\theta_4$. The altitude carries the dihedral angle $\mu$ and is orthogonal to the bases because of the truncation. Right dihedral angles in Fig.~\ref{tetrahedron0} -- \ref{tetrahedron} are not indicated by symbols. The other dihedral angles and the corresponding edge length are called \textit{essential}. 

\begin{figure}[ht]
\begin{center}
\includegraphics* [totalheight=5.5cm]{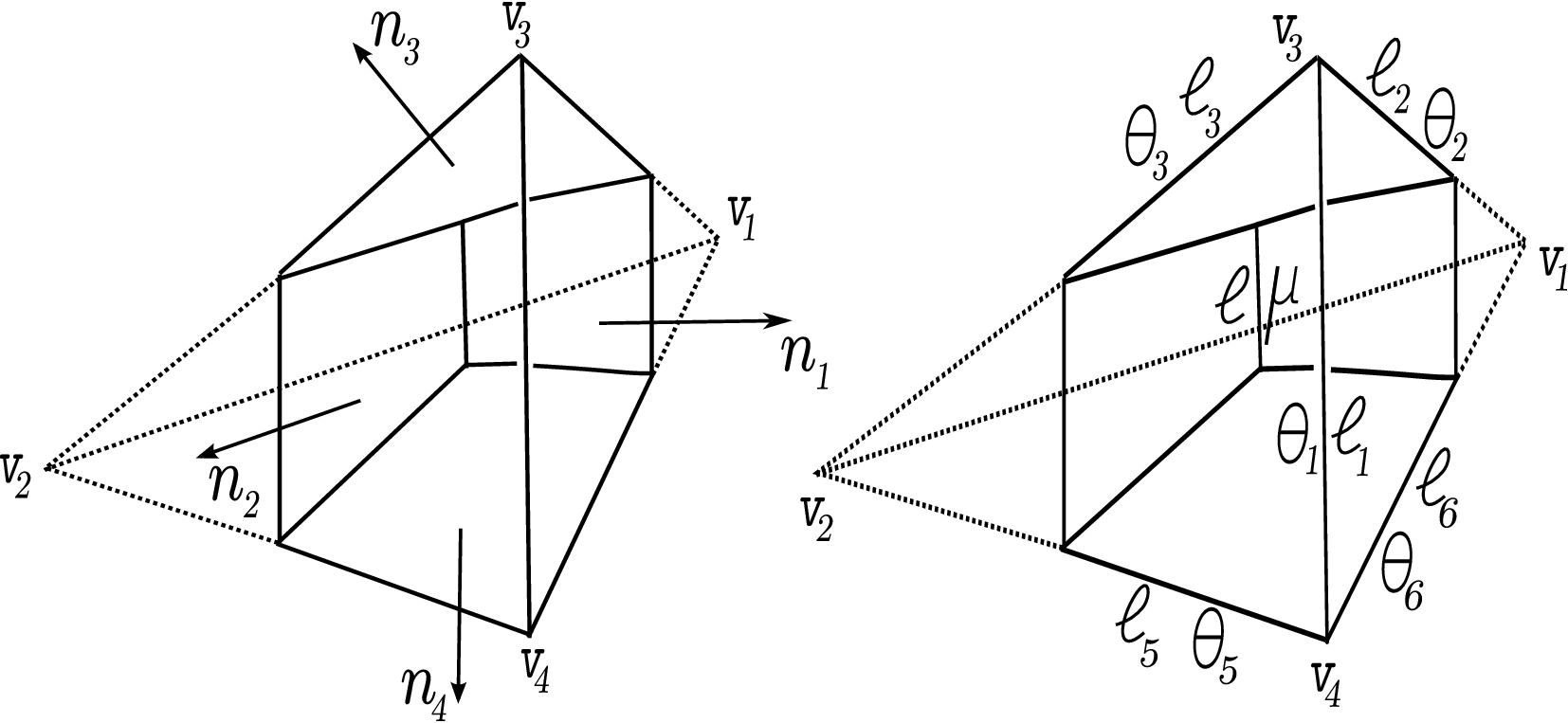}
\end{center}
\caption{Prism (intensely) truncated tetrahedron $T$ and its geometric parameters} \label{tetrahedron}
\end{figure}

\medskip
\textbf{Acknowledgement.} The first named author is grateful to Waseda University, Tokyo for hospitality during his stay in December, 2011. 
Part of this work was performed at the Institut Henri Poincar\'{e}, Paris in February-March, 2012.

\section{Preliminaries}

The following propositions reveal a relationship between the Lorentzian inner product of two vectors and the mutual position of the respective hyperplanes.

\begin{proposition}\label{propositionHyperplanes1}
Let $u$ and $v$ be two non-collinear points in $\mathscr{H}_{\star}$. Then the following holds:
\begin{itemize}
\item[i)] The hyperplanes $\Pi_{u}$ and $\Pi_{v}$ intersect if and only if $|\langle u,v \rangle| < 1$. The dihedral angle $\theta$ between them measured in $\Gamma_u \cap \Gamma_v$ is given by the formula $\cos \theta = -\langle u,v \rangle$.
\item[ii)] The hyperplanes $\Pi_{u}$ and $\Pi_{v}$ do not intersect in $\overline{\mathbb{B}^n}$ if and only if $|\langle u,v \rangle| > 1$. They intersect in $\mathrm{Ext}\,\mathbb{B}^n$ and admit a common perpendicular inside $\mathbb{B}^n$ of length $d$ given by the formula $\cosh d = \pm \langle u,v \rangle$\footnote{we chose the minus sign if $\Gamma_u\cap \Gamma_v = \emptyset$, otherwise we choose the plus sign.}.
\item[iii)] The hyperplanes $\Pi_{u}$ and $\Pi_{v}$ intersect on the ideal boundary $\partial \mathbb{B}^n$ only, if and only if $|\langle u,v \rangle| = 1$. 
\end{itemize}
In case ii) we say the hyperplanes $\Pi_{u}$ and $\Pi_{v}$ to be \textit{ultra-parallel} and in case iii) to be \textit{parallel}.
\end{proposition}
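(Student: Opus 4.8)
The plan is to read off all three cases from a single invariant, the restriction of the form $\langle\cdot,\cdot\rangle$ to the plane $V=\mathrm{span}\{u,v\}$. Since $u,v$ are non-collinear, $V$ is two-dimensional, and in the basis $\{u,v\}$ the form has Gram matrix $\left(\begin{smallmatrix}1 & c\\ c & 1\end{smallmatrix}\right)$ with $c=\langle u,v\rangle$, whose eigenvalues are $1\pm c$. Hence $\langle\cdot,\cdot\rangle|_V$ is positive definite, Lorentzian, or degenerate according as $|c|<1$, $|c|>1$, or $|c|=1$. The geometric link I would exploit is that the hyperplanes meet in the interior, on the boundary, or not at all according to the causal character of their common linear intersection $\mathrm{P}_u\cap\mathrm{P}_v=\{u,v\}^{\perp}=V^{\perp}$: in the hyperboloid model $\Pi_u\cap\Pi_v=\mathscr{H}^{+}\cap V^{\perp}$, which is non-empty exactly when $V^{\perp}$ contains a timelike vector, and touches $\partial\mathbb{H}^n$ exactly along the null rays of $V^{\perp}$. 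When $\langle\cdot,\cdot\rangle|_V$ is non-degenerate one has the orthogonal splitting $\mathbb{E}^{1,n}=V\oplus V^{\perp}$, so the signature of $V^{\perp}$ is complementary to that of $V$; this is what converts the trichotomy for $V$ into the trichotomy for the hyperplanes.

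For case i), $|c|<1$ makes $V$ positive definite, so $V^{\perp}$ has signature $(1,n-2)$ and contains timelike vectors; thus $\Pi_u$ and $\Pi_v$ intersect. At any common point $x\in\mathscr{H}^{+}$ the tangent space is $T_x\mathbb{H}^n=x^{\perp}$ with the induced positive definite metric, and since $\langle x,u\rangle=\langle x,v\rangle=0$ both $u$ and $v$ lie in $x^{\perp}$ and are unit; moreover $u$ (resp. $v$) is the outward unit normal to $\Gamma_u$ (resp. $\Gamma_v$), because $\langle\cdot,u\rangle$ increases away from $\mathrm{R}_u=\{\langle\cdot,u\rangle\le0\}$. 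The Euclidean fact that two half-spaces with outward normals $u,v$ enclose the interior dihedral angle $\theta=\pi-\angle(u,v)$ then gives $\cos\theta=-\cos\angle(u,v)=-\langle u,v\rangle$, as claimed, where $\cos\angle(u,v)=\langle u,v\rangle$ since both vectors are unit in $x^{\perp}$.

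For case ii), $|c|>1$ makes $V$ Lorentzian, so $V^{\perp}$ is positive definite and $\Pi_u\cap\Pi_v=\emptyset$ in $\overline{\mathbb{B}^n}$. Here the key object is the geodesic $\gamma=\mathscr{H}^{+}\cap V$, which now exists because $V$ contains timelike directions, and I claim it is the common perpendicular. Indeed, at the foot $x\in\gamma$ with $\langle x,u\rangle=0$ the tangent line to $\gamma$ is $x^{\perp}\cap V$, and since $u\in V$ and $\langle x,u\rangle=0$ this tangent direction is exactly $u$, the normal to $\Pi_u$; hence $\gamma\perp\Pi_u$, and likewise $\gamma\perp\Pi_v$. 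Choosing a Lorentz-orthonormal basis $e_0,e_1$ of $V$ and writing $u=\epsilon_u(\sinh a\,e_0+\cosh a\,e_1)$, $v=\epsilon_v(\sinh b\,e_0+\cosh b\,e_1)$ with $\epsilon_u,\epsilon_v\in\{\pm1\}$, the feet are $x(a),x(b)$ for $x(\phi)=\cosh\phi\,e_0+\sinh\phi\,e_1$, so $\cosh d=-\langle x(a),x(b)\rangle=\cosh(a-b)$ while $\langle u,v\rangle=\epsilon_u\epsilon_v\cosh(a-b)$. Therefore $\cosh d=|\langle u,v\rangle|$, and the sign $\epsilon_u\epsilon_v=\mathrm{sign}\langle u,v\rangle$ is fixed by the orientation of the outward normals relative to the half-spaces, reproducing the rule of the footnote.

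For case iii), $|c|=1$ makes $\langle\cdot,\cdot\rangle|_V$ positive semidefinite with one-dimensional radical spanned by a null vector $w$; as $w$ lies in the radical it is orthogonal to $u$ and $v$, so $w\in V^{\perp}$. Because $\{x:\langle x,w\rangle=0\}$ is a degenerate hyperplane tangent to the light cone along $\mathbb{R}w$, every vector of $V^{\perp}\subseteq w^{\perp}$ is spacelike or proportional to $w$; hence $\mathscr{H}^{+}\cap V^{\perp}=\emptyset$ while the only ideal point of $V^{\perp}$ is the ray $\mathbb{R}_{>0}w$, so $\Pi_u$ and $\Pi_v$ share exactly one point, lying on $\partial\mathbb{B}^n$. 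The main obstacle throughout is precisely step three of the first paragraph: turning the algebraic signature of $V^{\perp}$ into the correct geometric position, and in case ii) pinning down the length together with its sign. The degenerate case iii) needs extra care, since there the orthogonal splitting $\mathbb{E}^{1,n}=V\oplus V^{\perp}$ breaks down and one must argue directly through the radical.
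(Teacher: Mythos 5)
Your proof is correct. There is nothing in the paper to compare it against: the proposition is stated as a standard preliminary fact without proof (the relevant sources, e.g.\ Vinberg's survey and Ushijima's paper, appear in the bibliography), and the argument you give --- classifying the three cases by the signature of the Lorentzian form restricted to $V=\mathrm{span}\{u,v\}$, transferring the trichotomy to $V^{\perp}$ via the orthogonal splitting, then computing the angle in the tangent space at a common point and the distance along the geodesic $\mathscr{H}^{+}\cap V$ --- is precisely the standard one, and the mutually exclusive conclusions in the three exhaustive cases give the ``only if'' directions for free. The one step you leave compressed is the sign rule in case ii): it is worth writing out that, with your parametrisation, $\Gamma_u\cap\gamma=\{x(\phi):\epsilon_u\sinh(\phi-a)\le 0\}$, so that $\Gamma_u\cap\Gamma_v=\emptyset$ forces $\epsilon_u\epsilon_v=-1$, hence $\langle u,v\rangle<0$ and $\cosh d=-\langle u,v\rangle$, exactly as the footnote prescribes; likewise, the assertion that $\Pi_u$ and $\Pi_v$ meet in $\mathrm{Ext}\,\mathbb{B}^n$ in case ii) deserves the one-line remark that $V^{\perp}$ is then a nonzero positive definite subspace, so every nonzero vector of $P_u\cap P_v=V^{\perp}$ is spacelike and projects under $p$ outside $\overline{\mathbb{B}^n}$.
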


\begin{proposition}\label{propositionHyperplanes2}
Let $u$ be a point in $\mathbb{B}^n$ and let $\Pi_v$ be a geodesic hyperplane whose normal vector $v \in \mathscr{H}_{\star}$ is such that $\langle u,v \rangle < 0$. Then the length $d$ of the perpendicular dropped from the point $u$ onto the hyperplane $\Pi_v$ is given by the formula $\sinh d = -\langle u,v \rangle$.
\end{proposition}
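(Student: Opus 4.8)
The plan is to realise the foot of the perpendicular explicitly as a vector in $\mathbb{E}^{1,n}$ and then read off the distance from the Lorentzian inner product. Throughout I identify the point $u \in \mathbb{B}^n$ with its representative on the upper hyperboloid $\mathscr{H}^{+}$, normalised so that $\langle u, u \rangle = -1$, so that the distance formula $\cosh d = -\langle x, y\rangle$ applies. Writing $c := \langle u, v\rangle < 0$, the natural candidate for the foot is obtained by projecting $u$ off the normal direction $v$: set $w_0 := u - c\, v$ and $w := w_0/\sqrt{1 + c^2}$.

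First I would check that $w$ lies where it should. A direct computation using $\langle v, v\rangle = 1$ gives $\langle w_0, v\rangle = c - c = 0$, so $w \in \mathrm{P}_v$, and $\langle w_0, w_0\rangle = \langle u,u\rangle - 2c\langle u,v\rangle + c^2\langle v,v\rangle = -1 - c^2$, so that $\langle w, w\rangle = -1$ and $w$ indeed lies on the hyperboloid $\mathscr{H}$. To see that it lies on the correct sheet $\mathscr{H}^{+}$, I compute $\langle u, w\rangle = (\langle u,u\rangle - c\langle u,v\rangle)/\sqrt{1+c^2} = -(1+c^2)/\sqrt{1+c^2} = -\sqrt{1+c^2} \le -1$; since two unit time-like vectors have inner product $\le -1$ exactly when they lie on the same sheet, $w$ shares the sheet of $u$ and hence represents a genuine point of $\mathbb{H}^n$ lying on $\Pi_v$.

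Next comes the key point: verifying that the geodesic segment from $u$ to $w$ is actually orthogonal to $\Pi_v$, so that $w$ is the foot of the perpendicular and not merely some point of the hyperplane. The tangent direction of this geodesic at $w$ is the projection of $u$ onto the tangent space $T_w\mathscr{H}^{+} = \{x : \langle x,w\rangle = 0\}$, namely $T := u + \langle u, w\rangle\, w$. Substituting $u = \sqrt{1+c^2}\,w + c\,v$ together with $\langle u, w\rangle = -\sqrt{1+c^2}$ collapses this to $T = c\,v$. Thus the geodesic leaves $w$ in the direction of the normal $v$, which is by definition orthogonal to the tangent space $w^{\perp} \cap v^{\perp}$ of $\Pi_v$ at $w$; hence the segment $uw$ realises the distance from $u$ to $\Pi_v$.

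Finally, the distance itself falls out of the computation already made: $\cosh d = -\langle u, w\rangle = \sqrt{1 + c^2}$, whence $\sinh^2 d = \cosh^2 d - 1 = c^2$ and $\sinh d = |c| = -\langle u, v\rangle$, using $c < 0$. The only genuinely delicate step is the orthogonality verification in the third paragraph; the remainder is bookkeeping with the Lorentzian form. One could alternatively bypass the explicit perpendicularity check by minimising $-\langle u, w\rangle$ over $w \in \Pi_v \cap \mathscr{H}^{+}$ with a Lagrange multiplier, which recovers the same $w$, but the direct route above seems more transparent.
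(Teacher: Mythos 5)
Your proof is correct. Note that the paper itself offers no proof of this proposition: it is stated in the Preliminaries as a standard fact of Lorentzian hyperbolic geometry (of the kind found in Ushijima's or Vinberg's treatments), so there is nothing to compare against except the standard argument, which is exactly the one you give --- explicit orthogonal projection $w_0 = u - \langle u,v\rangle v$ onto $\mathrm{P}_v$, normalisation to the hyperboloid, verification that the tangent of the geodesic $uw$ at $w$ is proportional to the normal $v$, and the identity $\sinh^2 d = \cosh^2 d - 1$. All the computations check out, including the sheet verification via $\langle u,w\rangle = -\sqrt{1+c^2} \le -1$ and the sign bookkeeping $\sinh d = |c| = -\langle u,v\rangle$ from $c<0$.
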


Let $T$ be a generalised tetrahedron in $\mathbb{H}^3$ with outward Lorentzian normals $n_i$, $i=\overline{1,4}$, to its faces and vertex vectors $v_i$, $i=\overline{1,4}$, as depicted in Fig.~\ref{tetrahedron}. Let $G$ denote the Gram matrix for the normals $G = \langle n_i, n_j \rangle^{4}_{i,j=1}$.

The conditions under which $G$ describes a generalised mildly truncated tetrahedron are given by \cite{MP} and \cite{Ushijima}. For a prism truncated tetrahedron, its existence and geometry are determined by the vectors $\tilde{n}_i$, $i=\overline{1,6}$, where $\tilde{n}_i = n_i$ for $i=\overline{1,4}$ and $\tilde{n}_5 = v_1$, $\tilde{n}_6 = v_2$. Hence the above matrix $G$ does not have necessarily a Lorentzian signature. However, it will suffice for our purpose.

In case $T$ is prism truncated, as Fig.~\ref{tetrahedron} shows, we obtain
\begin{equation}\label{eqGramMatrix}
G = \left( \begin{array}{cccc}
1 & -\cos \theta_1 & -\cos \theta_2 & -\cos \theta_6\\
-\cos \theta_1 & 1 & -\cos \theta_3 & -\cos \theta_5\\
-\cos \theta_2 & -\cos \theta_3 & 1 & -\cosh \ell\\
-\cos \theta_6 & -\cos \theta_5 & -\cosh \ell & 1
\end{array} \right),
\end{equation}
in accordance with Proposition~\ref{propositionHyperplanes1}.

We will \textit{define} the edge length matrix $G^{\star}$ of $T$ by
\begin{equation}\label{eqLengthMatrx}
G^{\star} = \left( \begin{array}{cccc}
-1 & -\cos \mu & i \sinh \ell_5    & i \sinh \ell_3\\
-\cos \mu & -1 & i \sinh \ell_6    & i \sinh \ell_2\\
-i \sinh \ell_5 & -i \sinh \ell_6  & -1 & - \cosh \ell_1\\
-i \sinh \ell_3 & -i \sinh \ell_2  & - \cosh \ell_1 & -1
\end{array} \right)
\end{equation}
in order to obtain a Hermitian analogue of the usual edge length  matrix for a mildly truncated tetrahedron (for the latter, see \cite{Ushijima}). 

By \cite{MP} or \cite{Ushijima}, we have that
\begin{equation}\label{eqGramMinors} 
- g^{\star}_{ij} = \frac{c_{ij}}{\sqrt{c_{ii}} \sqrt{c_{jj}}},
\end{equation}
where $c_{ij}$ are the corresponding $(i,j)$ cofactors of the matrix $G$. The complex conjugation $g^{\star}_{ij} = \overline{g^{\star}_{ji}}$, $i,j=\overline{1,4}$, corresponds to a choice of the analytic strata for the square root function $\sqrt{\circ}$. 

\begin{proposition}[\cite{MP, Ushijima}]\label{propositionVertices}
The vertex $v_i$, $i=\overline{1,4}$, of $T$ is proper, ideal, or ultra-ideal provided that $c_{ii} > 0$, $c_{ii} = 0$, or $c_{ii} < 0$, respectively.
\end{proposition}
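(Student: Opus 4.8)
The plan is to reduce the trichotomy to the sign of the single scalar $\langle v_i, v_i\rangle$ and then to express this quantity through the cofactors of $G$ by exploiting the duality between the face normals $n_j$ and the vertex vectors $v_i$. Recall from the hyperboloid and cone picture of the Introduction that a vertex, regarded as a projective point, is proper, ideal, or ultra-ideal exactly according to whether the line it spans meets $\mathscr{H}$, $\mathscr{K}$, or $\mathscr{H}_{\star}$; that is, according to whether $\langle v_i, v_i\rangle < 0$, $=0$, or $>0$. (This is unaffected by rescaling $v_i$, since $\langle \lambda v_i, \lambda v_i\rangle = \lambda^2 \langle v_i, v_i\rangle$.) Hence it suffices to relate the sign of $\langle v_i, v_i\rangle$ to that of $c_{ii}$.

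First I would set up the biorthogonality. Since $v_i$ is the intersection of the three faces carrying the normals $n_j$ with $j\neq i$, and each such face lies on the hyperplane $\mathrm{P}_{n_j} = \{x : \langle x, n_j\rangle = 0\}$, the vector $v_i$ is characterised up to a nonzero real scalar by the conditions $\langle v_i, n_j\rangle = 0$ for all $j\neq i$. Writing $J = \mathrm{diag}(-1,1,1,1)$ and $N = [\,n_1 \mid n_2 \mid n_3 \mid n_4\,]$, so that $G = N^\top J N$, I would introduce the dual vectors $w_i = \sum_k (G^{-1})_{ik}\, n_k = N G^{-1} e_i$. A direct computation gives $\langle w_i, n_j\rangle = e_i^\top G^{-1} N^\top J N e_j = e_i^\top G^{-1} G e_j = \delta_{ij}$, so $w_i$ is orthogonal to every $n_j$ with $j\neq i$. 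As the orthogonal complement of $\mathrm{span}\{n_j : j\neq i\}$ is one-dimensional in the nondegenerate space $\mathbb{E}^{1,3}$, the vectors $w_i$ and $v_i$ are parallel, and therefore $\langle v_i, v_i\rangle$ and $\langle w_i, w_i\rangle$ share the same sign.

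Next I would compute $\langle w_i, w_i\rangle = e_i^\top G^{-1} N^\top J N G^{-1} e_i = e_i^\top G^{-1} e_i = (G^{-1})_{ii} = c_{ii}/\det G$, using that the diagonal entries of the inverse are the diagonal cofactors divided by the determinant. It then remains to fix the sign of $\det G$. Since for a nondegenerate tetrahedron the four normals $n_i$ form a basis of $\mathbb{E}^{1,3}$, the congruence $G = N^\top J N$ together with Sylvester's law of inertia shows that $G$ has the same signature as $J$, namely one negative and three positive eigenvalues; consequently $\det G < 0$. Combining the last two displays, the sign of $\langle v_i, v_i\rangle$ is opposite to that of $c_{ii}$, so that $v_i$ is proper when $c_{ii}>0$, ideal when $c_{ii}=0$, and ultra-ideal when $c_{ii}<0$, as claimed.

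The main obstacle I anticipate is not the linear algebra but the sign bookkeeping around $\det G$, together with verifying that the congruence argument survives the geometric degenerations. One must confirm that even when a vertex passes to the ideal or ultra-ideal regime the four face normals stay linearly independent, so that $G$ remains nondegenerate and the strict sign $\det G < 0$ persists; the ideal case $c_{ii}=0$ is then the harmless boundary, where $\langle v_i, v_i\rangle = 0$ while $\det G$ is still strictly negative.
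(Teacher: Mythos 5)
Your argument is correct: reducing the trichotomy to the sign of $\langle v_i,v_i\rangle$, identifying $v_i$ with the dual vector $w_i = NG^{-1}e_i$ via biorthogonality, computing $\langle w_i,w_i\rangle = (G^{-1})_{ii} = c_{ii}/\det G$, and fixing $\det G<0$ by Sylvester's law is exactly the standard proof, and it is essentially the one given in the references \cite{MP, Ushijima} from which the paper imports this proposition without proof. The only point deserving the care you already flag is the nondegeneracy of $G$: for a genuine (possibly truncated) tetrahedron the four face planes are in general position, so the normals $n_i$ form a basis of $\mathbb{E}^{1,3}$ and the congruence $G=N^{\top}JN$ indeed forces signature $(3,1)$ and $\det G<0$ throughout the mildly and prism truncated regimes.
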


Hence, Propositions \ref{propositionHyperplanes1}-\ref{propositionHyperplanes2} imply that the matrices $G$ and $G^{\star}$ agree concerning the relationship of the geometric parameters of $T$. The matrix $G^{\star}$ has complex entries since the minors $c_{ii}$, $i=\overline{1,4}$, in formula (\ref{eqGramMinors}) may have different signs by Proposition~\ref{propositionVertices}.

To perform our computations later on in a more efficient way, we shall introduce the parameters $a_k$, $k=\overline{1,6}$, associated with the edges of the tetrahedron $T$. If $T$ is a prism truncated tetrahedron, then we set $a_k := e^{i \theta_k}$, $k\in\{1,2,3,5,6\}$, $a_4 := e^{\ell}$ and then
\begin{equation}\label{eqGramMatrixAlg}
G = \left( \begin{array}{cccc}
1 & -\frac{a_1 + 1/a_1}{2} & -\frac{a_2 + 1/a_2}{2} & -\frac{a_6 + 1/a_6}{2}\\
-\frac{a_1 + 1/a_1}{2} & 1 & -\frac{a_3 + 1/a_3}{2} & -\frac{a_5 + 1/a_5}{2}\\
-\frac{a_2 + 1/a_2}{2} & -\frac{a_3 + 1/a_3}{2} & 1 & -\frac{a_4 + 1/a_4}{2}\\
-\frac{a_6 + 1/a_6}{2} & -\frac{a_5 + 1/a_5}{2} & -\frac{a_4 + 1/a_4}{2} & 1
\end{array} \right).
\end{equation}

The meaning of the parameters becomes clear if one observes the picture of a mildly truncated tetrahedron $T$ (see Fig.~\ref{tetrahedron}). In case two vertices $v_1$ and $v_2$ of $T$ become ultra-ideal and the corresponding polar hyperplanes intersect (see \cite[Sections 2-3]{Ushijima} for more basic details), the edge $v_1v_2$ becomes dual in a sense to the altitude $\ell$ of the resulting prism. Since in case of a mildly truncated tetrahedron we have $a_k = e^{i \theta_k}$ according to \cite{Ushijima}, the altitude $\ell$ for now corresponds still to the parameter $a_4$, in order to keep consistent notation.
 
\section{Volume formula}

Let $\mathscr{U} = \mathscr{U}(a_1,a_2,a_3,a_4,a_5,a_6,z)$ denote the function
\begin{eqnarray}\label{eqUDefinition}
\lefteqn{\mathscr{U} = \mathrm{Li}_2(z) + \mathrm{Li}_2(a_1a_2a_4a_5z) + \mathrm{Li}_2(a_1a_3a_4a_6z) + \mathrm{Li}_2(a_2a_3a_5a_6z)}\\
\nonumber && - \mathrm{Li}_2(-a_1a_2a_3z) - \mathrm{Li}_2(-a_1a_5a_6z) - \mathrm{Li}_2(-a_2a_4a_6z) - \mathrm{Li}_2(-a_3a_4a_5z)
\end{eqnarray}
depending on seven complex variables $a_k$, $k=\overline{1,6}$ and $z$, where $\mathrm{Li}_2(\circ)$ is the dilogarithm function.
Let $z_{-}$ and $z_{+}$ be two solutions to the equation $e^{z \frac{\partial \mathscr{U}}{\partial z}} = 1$ in the variable $z$. According to \cite{MY}, these are
\begin{equation}\label{eqZpm}
z_{-} = \frac{-q_1-\sqrt{q^2_1-4q_0q_2}}{2q_2} \,\,\,\mbox{ and }\,\,\, z_{+} = \frac{-q_1+\sqrt{q^2_1-4q_0q_2}}{2q_2},
\end{equation}
where
\begin{equation*}
q_0 = 1 + a_1a_2a_3 + a_1a_5a_6 + a_2a_4a_6 + a_3a_4a_5
+ a_1a_2a_4a_5 + a_1a_3a_4a_6 + a_2a_3a_5a_6,
\end{equation*}
\begin{eqnarray}\label{eqQ1Q2Q3Definition}
\nonumber q_1 = -a_1 a_2 a_3 a_4 a_5 a_6 \bigg{(}\bigg{(}a_1-\frac{1}{a_1}\bigg{)}\bigg{(}a_4-\frac{1}{a_4}\bigg{)} + \bigg{(}a_2-\frac{1}{a_2}\bigg{)}\bigg{(}a_5-\frac{1}{a_5}\bigg{)}\\
+\bigg{(}a_3-\frac{1}{a_3}\bigg{)}\bigg{(}a_6-\frac{1}{a_6}\bigg{)}\bigg{)},
\end{eqnarray}
\begin{eqnarray*}
q_2 = a_1 a_2 a_3 a_4 a_5 a_6 (a_1 a_4 + a_2 a_5 + a_3 a_6 + a_1a_2a_6 + a_1a_3a_5 + a_2a_3a_4 + \\a_4a_5a_6
+ a_1a_2a_3a_4a_5a_6).
\end{eqnarray*}

Given a function $f(x,y,\dots,z)$, let $f(x,y,\dots,z)\mid^{z=z_{-}}_{z=z_{+}}$ denote the difference $f(x,y,\dots,z_{-}) - f(x,y,\dots,z_{+})$. Now we define the following 
function $\mathscr{V} = \mathscr{V}(a_1,a_2,a_3,a_4,a_5,a_6,z)$ by means of the equality
\begin{equation}\label{eqVDefinition}
\mathscr{V} = \frac{i}{4}\left( \mathscr{U}(a_1,a_2,a_3,a_4,a_5,a_6,z) - z\, \frac{\partial \mathscr{U}}{\partial z}\, \log z \right)\bigg{\vert}^{z=z_{-}}_{z=z_{+}}.
\end{equation}

Let $\mathscr{W} = \mathscr{W}(a_1,a_2,a_3,a_4,a_5,a_6,z)$ denote the function below, that will correct possible branching of $\mathscr{V}$ resulting from the use of (di-)logarithms:
\begin{equation}\label{eqWDefinition}
\mathscr{W} = \sum^6_{k=1} \left( a_k\, \frac{\partial \mathscr{V}}{\partial a_k} - \frac{i}{4}\, \log e^{-4 i\, a_k\, \frac{\partial \mathscr{V}}{\partial a_k}} \right) \log a_k.
\end{equation}

Given a generalised hyperbolic tetrahedron as in Fig.~\ref{tetrahedron}, truncated down to a quadrilateral prism with essential dihedral angles $\theta_k$, $k\in\{1,2,3,5,6\}$, altitude $\ell$ and dihedral angle $\mu$ along it, we set $a_k = e^{i \theta_k}$, $k\in\{1,2,3,5,6\}$, $a_4 = e^{\ell}$, as above. Then the following theorem holds.

\begin{theorem}\label{thmVolume}
Let $T$ be a generalised hyperbolic tetrahedron as given in Fig.~\ref{tetrahedron}. Its volume equals
\begin{equation*}
\mathrm{Vol}\,T = \Re \left(-\mathscr{V} + \mathscr{W} - \frac{\mu \ell}{2}\right).
\end{equation*}
\end{theorem}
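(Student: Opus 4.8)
The plan is to verify that the proposed expression satisfies the Schläfli differential formula for the volume of the prism-truncated tetrahedron, and then to pin down the integration constant by a degeneration argument. For a generalised hyperbolic tetrahedron the Schläfli formula reads $d\,\mathrm{Vol}\,T = -\frac{1}{2}\sum_e \ell_e\, d\theta_e$, where the sum runs over the edges $e$ of $T$ with length $\ell_e$ and interior dihedral angle $\theta_e$; in the prism-truncated case the edge that carried $\theta_4$ is replaced by the altitude $\ell$ carrying the angle $\mu$, so the right-hand side becomes $-\frac{1}{2}\big(\sum_{k\in\{1,2,3,5,6\}}\ell_k\,d\theta_k + \ell\,d\mu\big)$. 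Hence it suffices to show that the differential of $\Re(-\mathscr{V}+\mathscr{W}-\tfrac{\mu\ell}{2})$ agrees with this one-form, and that the two functions coincide at a single configuration.

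First I would compute the partial derivatives $a_k\,\partial\mathscr{V}/\partial a_k$. The crucial simplification is that $z_\pm$ are defined as critical points of $\mathscr{U}$ in the variable $z$, i.e. $e^{z\,\partial_z\mathscr{U}}=1$ there. Writing $F = \mathscr{U} - z\,\partial_z\mathscr{U}\,\log z$ and differentiating $F(a,z_\pm(a))$ totally in $a_k$, the explicit $\log z_\pm$ term and the term carrying $\partial_{a_k}z_\pm$ cancel against one another precisely because of the critical-point constraint, leaving the clean identity $\partial_{a_k}\mathscr{V} = \tfrac{i}{4}\,\partial_{a_k}\mathscr{U}\mid^{z=z_-}_{z=z_+}$. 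This is the Legendre/envelope mechanism for which the correction $-z\,\partial_z\mathscr{U}\,\log z$ in (\ref{eqVDefinition}) is designed. Since $\mathrm{Li}_2'(w)=-\log(1-w)/w$, each $a_k\,\partial_{a_k}\mathscr{U}$ collapses to a sum of logarithms of the linear factors $1\mp(\text{monomial})\,z$ appearing in (\ref{eqUDefinition}), evaluated between $z_+$ and $z_-$. The content of the computation is to identify these logarithms with the geometric edge data: via (\ref{eqGramMinors}) the cofactors $c_{ij}$ of $G$ are the entries of $G^{\star}$, so that $a_k\,\partial_{a_k}\mathscr{V}$ recovers, up to the factor $i/4$ and a branch, the hyperbolic edge lengths $\ell_k$ (and for $k=4$ the pair $(\ell,\mu)$) encoded in (\ref{eqLengthMatrx}).

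The function $\mathscr{W}$ is designed exactly to remove the $2\pi i$-ambiguities that these logarithms introduce: the bracket $a_k\,\partial_{a_k}\mathscr{V} - \frac{i}{4}\log e^{-4i\,a_k\,\partial_{a_k}\mathscr{V}}$ measures the difference between the chosen branch of $a_k\,\partial_{a_k}\mathscr{V}$ and its principal value, so that after forming $-\mathscr{V}+\mathscr{W}$ the surviving one-form is single-valued once real parts are taken. I would therefore check term by term that the real part of $a_k\,\partial_{a_k}(-\mathscr{V}+\mathscr{W})\,d(\log a_k)$ equals the Schläfli contribution of the corresponding edge. For $k\in\{1,2,3,5,6\}$ one has $d(\log a_k)=i\,d\theta_k$, and the previous step identifies $\Im\big(a_k\,\partial_{a_k}(-\mathscr{V}+\mathscr{W})\big)$ with $\tfrac{1}{2}\ell_k$, giving the summand $-\tfrac{1}{2}\ell_k\,d\theta_k$. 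For $k=4$ one has $d(\log a_4)=d\ell$; here $\Re\big(a_4\,\partial_{a_4}(-\mathscr{V}+\mathscr{W})\big)$ should equal $\tfrac{\mu}{2}$, so that the $d\ell$-terms cancel against the $-\tfrac{\mu}{2}\,d\ell$ produced by differentiating $-\tfrac{\mu\ell}{2}$, leaving exactly the altitude contribution $-\tfrac{1}{2}\ell\,d\mu$. The necessity of the extra summand $-\frac{\mu\ell}{2}$ reflects the structural change of the formula on passing from the mild to the prism regime first noted in \cite{Kellerhals}: the former dihedral variable $\theta_4$ has become a length $\ell$, and a Legendre-type term is required to keep the one-form exact and the total expression real.

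The main obstacle I anticipate is precisely the bookkeeping of branches in the two steps above: establishing that the chosen analytic strata of the square roots in (\ref{eqGramMinors}), of $\log z_\pm$ in (\ref{eqVDefinition}), and of the logarithms inside $\mathscr{W}$ are mutually consistent, so that $\Re(-\mathscr{V}+\mathscr{W})$ is genuinely single-valued and its differential equals the Schläfli form with no stray $\pi$-jumps. This is more delicate here than in the mildly truncated case because, as remarked after (\ref{eqGramMatrix}), the Gram matrix $G$ need not have Lorentzian signature, so the cofactors $c_{ii}$ change sign by Proposition~\ref{propositionVertices} and $G^{\star}$ becomes genuinely complex. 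Once exactness is secured, I would fix the additive constant by degenerating the prism to a configuration whose volume is already known — for instance shrinking the altitude $\ell\to 0$, or specialising to a symmetric or right-angled case reducing to an established formula of \cite{MU, Ushijima} or \cite{Kellerhals} — and check that both sides agree there.
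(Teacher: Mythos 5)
Your proposal follows essentially the same route as the paper's proof: the generalised Schl\"afli formula, the envelope/critical-point identity $\partial_{a_k}\mathscr{V}=\tfrac{i}{4}\,\partial_{a_k}\mathscr{U}\big\vert^{z=z_-}_{z=z_+}$, the role of $\mathscr{W}$ as a branch correction (the paper's Lemmas~\ref{lemmaWTerm} and~\ref{lemmaNoBranching}), the Legendre-type term $-\mu\ell/2$ accounting for the exchange of the variable pair $(\theta_4,\ell_4)$ for $(\ell,\mu)$, and the determination of the additive constant by a degeneration (the paper sends all $\theta_k\to\pi/2$, so that $T$ collapses to a point while tending to a Euclidean prism, forcing $\mathscr{V}=\mathscr{W}=0$). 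The only substantive step you leave unexecuted is the identification of the ratios $\phi(z_-)\psi(z_+)/\bigl(\phi(z_+)\psi(z_-)\bigr)$ with the cofactor expressions $\bigl(c_{ij}\mp\delta\tfrac{a_k-a_k^{-1}}{2}\bigr)/\bigl(c_{ij}\pm\delta\tfrac{a_k-a_k^{-1}}{2}\bigr)$, which is the computational core of the paper's argument and is carried out there via the polynomial identity $\mathscr{E}\equiv 0$ together with Jacobi's theorem for the minors of $G$.
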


\bigskip
\textbf{Note.} In the statement above, the altitude length is $\ell = \Re \log a_4$ and the corresponding dihedral angle equals
$\mu = -2\, \Re \left( a_4 \frac{\partial \mathscr{V}}{\partial a_4} \right)\, \mathrm{mod}\, \pi$.

\bigskip
\textbf{3.1} \textbf{Preceding lemmas.} Before giving a proof to Theorem~\ref{thmVolume}, we need several auxiliary statements concerning the branching of the volume function.

\begin{lemma}\label{lemmaWTerm}
The function $\mathscr{W}$ has a.e. vanishing derivatives $\frac{\partial \mathscr{W}}{\partial a_k}$, $k=\overline{1,6}$.
\end{lemma}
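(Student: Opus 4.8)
The plan is to exploit the special form of the coefficient of each $\log a_k$ in~(\ref{eqWDefinition}). Writing $w_k := a_k\,\frac{\partial\mathscr{V}}{\partial a_k}$ and introducing the auxiliary function
\[
f(w) := w - \frac{i}{4}\,\log e^{-4iw},
\]
we have $\mathscr{W} = \sum_{k=1}^{6} f(w_k)\,\log a_k$, so the product rule gives
\[
\frac{\partial\mathscr{W}}{\partial a_j} = \sum_{k=1}^{6} f'(w_k)\,\frac{\partial w_k}{\partial a_j}\,\log a_k \;+\; \frac{f(w_j)}{a_j}.
\]
Everything hinges on two facts: that $f$ is locally constant, so the sum drops out, and that $f(w_j)$ itself vanishes on the stratum where $T$ is genuinely prism truncated.

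First I would show that $f$ is locally constant. Off the cut of the principal logarithm, i.e. away from the real-analytic locus where $e^{-4iw}\in(-\infty,0]$, equivalently $\Re w \in \frac{\pi}{4}+\frac{\pi}{2}\mathbb{Z}$, the map $w\mapsto \log e^{-4iw}$ is holomorphic with derivative $-4i$, so
\[
f'(w) = 1 - \frac{i}{4}\cdot(-4i) = 1 + i^2 = 0.
\]
Hence $f$ is piecewise constant with values in $\frac{\pi}{2}\mathbb{Z}$: on each component $\log e^{-4iw} = -4iw - 2\pi i\,N$ for a locally constant integer $N$, and substitution yields $f(w) = -\frac{\pi}{2}N$. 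Since $w_k = a_k\partial\mathscr{V}/\partial a_k$ is holomorphic in $\mathbf{a}$ off the branch loci of the dilogarithms building $\mathscr{V}$, each $f(w_k)$ is locally constant in every $a_j$, which annihilates the first sum almost everywhere.

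There remains the term $f(w_j)/a_j$. Here I would invoke the normalization recorded in the Note following Theorem~\ref{thmVolume}: the branch selected by the roots $z_\pm$ of~(\ref{eqZpm}) is the one for which $\Re w_k$ lies in the principal strip $(-\frac{\pi}{4},\frac{\pi}{4})$ throughout the interior of the prism-truncated locus, so that $N = 0$ and $f(w_k) = 0$ there. Consequently $f(w_j)/a_j = 0$ on that open set, while the jumps of $f(w_k)$ across the branch hypersurfaces --- exactly where $\mathscr{W}$ does its correcting work --- are confined to a set of measure zero. Together with the previous paragraph this yields $\partial\mathscr{W}/\partial a_j = 0$ almost everywhere.

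I expect the real obstacle to be the branch bookkeeping rather than any isolated computation. One must check that the cut of $\log e^{-4iw_k}$ and the cuts of the dilogarithms in $\mathscr{U}$, hence in $\mathscr{V}$, jointly carve out only a measure-zero set; and, more delicately, that on the whole prism-truncated stratum $\Re w_k$ really stays inside the principal strip, so that $f(w_k)$ vanishes identically there. Establishing the latter is where the explicit form of $z_\pm$ and the $\mathrm{mod}\,\pi$ definition of $\mu$ must enter; the rest reduces to the elementary identity $f'\equiv 0$.
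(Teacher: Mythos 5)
Your identity $f'(w) = 1 - \tfrac{i}{4}(-4i) = 0$ \emph{is} the paper's entire proof: the authors differentiate (\ref{eqWDefinition}), invoke $\tfrac{\mathrm{d}}{\mathrm{d}z}\log z = 1/z$ and $e^{z}e^{w}=e^{z+w}$ to see that each coefficient $a_k\tfrac{\partial\mathscr{V}}{\partial a_k}-\tfrac{i}{4}\log e^{-4i a_k \partial\mathscr{V}/\partial a_k}$ is locally constant off a measure-zero branch locus, and stop. So on the main mechanism you and the paper coincide exactly.

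Where you part company with the paper is the product-rule remainder $f(w_j)/a_j$. You are right that this term is genuinely present and that the lemma, read literally, stands or falls with it; the paper's one-sentence proof never mentions it. Your proposed disposal --- $f(w_j)\in\tfrac{\pi}{2}\mathbb{Z}$ is locally constant and vanishes on the stratum fixed by the paper's normalisations --- is the correct mechanism, but, as you concede, you do not verify that $\Re w_j$ stays in the principal strip, and the paper supplies nothing to cite for this either; indeed the computations of Section 3.2 give $e^{-4iw_2}=-e^{-2\ell_2}$, which sits \emph{on} the cut of the principal logarithm for every parameter value, so the ``measure zero'' language does not rescue that case and a branch convention must be fixed by hand. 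Two observations would let you close the argument at the level of rigour the paper itself operates at. First, for what the lemma is actually used for (claims (i)--(ii) of Theorem~\ref{thmVolume}), one only needs $\Re\bigl(\partial\mathscr{W}/\partial\theta_j\bigr)=\Re\bigl(i\,f(w_j)\bigr)=0$ for $j\neq 4$, and this is automatic from $f(w_j)\in\tfrac{\pi}{2}\mathbb{Z}\subset\mathbb{R}$ with no normalisation needed. Second, for the constancy argument one can anchor $f(w_j)=0$ at the reference configuration $a_k=i$ ($k\neq 4$), $a_4=1$ used at the end of the proof of the theorem, where $z_{\pm}=1$ and the dilogarithms are unbranched, and then propagate by local constancy over the connected principal stratum. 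In short: your proposal is at least as complete as the paper's proof, follows the same route, and correctly isolates the one point that neither you nor the paper actually establishes.
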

\begin{proof}
By computing the derivative of (\ref{eqWDefinition}) with respect to each $a_k$, $k=\overline{1,6}$, outside of its branching points and by making use of the identities 
$\frac{\mathrm{d}}{\mathrm{d}z}\log z = 1/z$, $e^{z} e^{w} = e^{z+w}$ for all $z,w \in \mathbb{C}$. Since the branching points of a finite amount of 
$\log(\circ)$ and $\mathrm{Li}_2(\circ)$ functions form a discrete set in $\mathbb{C}$, the lemma follows.
\end{proof}

\begin{lemma}\label{lemmaNoBranching}
The function $\Re\left(-\mathscr{V}+\mathscr{W}\right)$ does not branch with respect to the variables $a_k$, $k=\overline{1,6}$, and $z$.
\end{lemma}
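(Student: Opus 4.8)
The statement to prove is that the real-valued function $\Re\left(-\mathscr{V}+\mathscr{W}\right)$ is single-valued (does not branch) as the variables $a_k$ and $z$ are analytically continued. The sources of multivaluedness are the dilogarithm $\mathrm{Li}_2$ and the logarithms $\log z$ and $\log a_k$ entering $\mathscr{V}$ and $\mathscr{W}$. The plan is to track how each of these functions jumps across its branch cuts, and to verify that the combination $-\mathscr{V}+\mathscr{W}$ has purely imaginary (or at least real-part-cancelling) jumps, so that taking the real part kills the branching.

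\medskip
\emph{Step 1: Record the branching data of $\mathscr{V}$.} First I would isolate the elementary branch jumps of $\mathscr{V}$ coming from (\ref{eqVDefinition}). Recall the standard monodromy of the dilogarithm: around $z=1$ one has $\mathrm{Li}_2(z)\mapsto \mathrm{Li}_2(z) - 2\pi i\,\log z$, and $\log$ itself picks up $2\pi i$ around the origin. Since $\mathscr{V}$ is built from $\mathscr{U}$ and from the combination $z\,\partial_z\mathscr{U}\cdot\log z$, I expect that crossing a branch cut in the $z$-plane or in one of the $a_k$-planes changes $\mathscr{V}$ by an expression of the form $\tfrac{i}{4}\cdot(2\pi i)\cdot(\text{integer combination of }\log a_k)$ together with terms that are themselves real or that will be matched in Step 2. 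The key bookkeeping is that each $\mathrm{Li}_2$ argument in (\ref{eqUDefinition}) is a monomial in the $a_k$ and $z$, so its logarithm is a $\mathbb{Z}$-linear combination of $\log a_k$ and $\log z$; this is what makes the jumps of $\mathscr{V}$ expressible through the quantities $a_k\,\partial\mathscr{V}/\partial a_k$ that also appear in $\mathscr{W}$.

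\medskip
\emph{Step 2: Exhibit the cancellation via the design of $\mathscr{W}$.} The correction term $\mathscr{W}$ in (\ref{eqWDefinition}) was manifestly engineered for this purpose: the subtracted quantity $\tfrac{i}{4}\log e^{-4i\,a_k\,\partial\mathscr{V}/\partial a_k}$ is precisely the ``reduction mod $2\pi i$'' of $a_k\,\partial\mathscr{V}/\partial a_k$, i.e.\ it differs from $a_k\,\partial\mathscr{V}/\partial a_k$ by a half-integer multiple of $\tfrac{\pi}{2}$ whose variation across branches is real. I would show that whenever continuation makes $\mathscr{V}$ jump, the corresponding jump in one of the coefficients $a_k\,\partial\mathscr{V}/\partial a_k$ feeds, through the $\log$ in $\mathscr{W}$, into an equal and opposite jump in $\mathscr{W}$, so that the jump of $-\mathscr{V}+\mathscr{W}$ is a real multiple of $\log a_k$ (equivalently of $i\theta_k$ and $\ell$). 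By Lemma~\ref{lemmaWTerm} the derivatives $\partial\mathscr{W}/\partial a_k$ vanish almost everywhere, which guarantees that the coefficients in these jumps are locally constant and hence that the jumps are genuinely of the controlled ``$2\pi i\,\mathbb{Z}$-linear-in-$\log a_k$'' type rather than something varying continuously.

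\medskip
\emph{Step 3: Take the real part.} Finally I would argue that every surviving jump of $-\mathscr{V}+\mathscr{W}$ is purely imaginary, so that $\Re(-\mathscr{V}+\mathscr{W})$ is unchanged. Here the geometric normalisation enters: with $a_k=e^{i\theta_k}$ for $k\in\{1,2,3,5,6\}$ and $a_4=e^{\ell}$, one has $\log a_k = i\theta_k$ (imaginary) for the angular parameters and $\log a_4=\ell$ (real). The jumps proportional to $i\theta_k$ are automatically imaginary; for the single real parameter $a_4$ I expect the relevant jump coefficient to vanish (the altitude direction carries no dilogarithmic monodromy of the offending kind), or else to be itself imaginary after combining with the $\tfrac{i}{4}$ prefactor.

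\medskip
The main obstacle, and the step demanding the most care, is the global monodromy bookkeeping in Step 2: one must check that the $\mathbb{Z}$-valued ambiguities produced by \emph{all} eight dilogarithms in (\ref{eqUDefinition}), evaluated at the two branch-dependent roots $z_\pm$ of (\ref{eqZpm}), recombine so that the net jump is exactly what $\mathscr{W}$ is built to absorb. In particular the roots $z_\pm$ themselves depend analytically on the $a_k$ and can migrate across the cuts of the various $\mathrm{Li}_2$ arguments, so the difference $f\vert^{z=z_-}_{z=z_+}$ in (\ref{eqVDefinition}) may acquire jumps from the motion of $z_\pm$ in addition to the explicit jumps of the integrand; verifying that these two sources are consistent, and that only real multiples of $\log a_k$ survive, is the crux of the argument.
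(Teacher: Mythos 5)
Your architecture is the same as the paper's: compare two analytic strata of $\mathscr{U}$, exploit the fact that every $\mathrm{Li}_2$-argument in (\ref{eqUDefinition}) is a monomial in the $a_k$ and $z$ (so the stratum ambiguity is a $\mathbb{Z}$-linear combination of $2\pi i\log a_j$, $2\pi i\log z_{\pm}$ and constants), and let the designed term $\mathscr{W}$ absorb it. However, the two points you defer are exactly where the proof lives, and your Step 3 as stated would not close the argument. On the first point, the paper shows (its formulas (\ref{branchingU})--(\ref{branchingdUdz})) that the $2\pi i k_j\log z_{\pm}$ contributions coming from the dilogarithm monodromy are cancelled \emph{identically} by the stratum ambiguity of the subtracted term $z\,\partial_z\mathscr{U}\cdot\log z$ in (\ref{eqVDefinition}); this is why the ambiguity of $\mathscr{V}$ collapses to $-\frac{\pi}{2}\sum_j m_j\log a_j+\frac{i\pi^2}{2}m_7$ with no $z$-dependence left. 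You correctly flag the migration of $z_{\pm}$ across cuts as ``the crux'' but do not resolve it; without this cancellation the jumps of $\mathscr{V}$ are not of a form that $\mathscr{W}$, which is built only from $\log a_k$, can absorb.

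The second and more serious issue is your endgame. Since $e^{-4i(x-\pi m/2)}=e^{-4ix}$ for $m\in\mathbb{Z}$, the coefficient $a_j\,\partial_{a_j}\mathscr{V}-\frac{i}{4}\log e^{-4i\,a_j\,\partial_{a_j}\mathscr{V}}$ shifts between strata by exactly $-\pi m_j/2$, so $\mathscr{W}$ shifts by exactly $-\frac{\pi}{2}\sum_j m_j\log a_j$ and the $\log a_j$ terms cancel \emph{completely} in $-\mathscr{V}+\mathscr{W}$; the only residue is the purely imaginary additive constant $-\frac{i\pi^2}{2}m_7$, which $\Re$ kills. Your Step 3 instead assumes a residual ``real multiple of $\log a_k$'' survives and appeals to $\log a_k=i\theta_k$ being imaginary --- an argument that fails for $k=4$, where $\log a_4=\ell$ is real, and which you patch only with the unverified ``I expect the relevant jump coefficient to vanish.'' If the cancellation between $-\mathscr{V}$ and $\mathscr{W}$ were not exact in the $a_4$-direction, the lemma would be false; establishing that exactness (via the explicit $-\pi m_j/2$ computation above) is the content of the proof, not an optional refinement, so as written the proposal has a genuine gap precisely at the one variable where taking real parts does not help.
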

\begin{proof}
Let us consider a possible branching of the function defined by formula (\ref{eqUDefinition}). Let $\mathscr{U}$ comprise only principal strata of the dilogarithm and let $\mathscr{U}^{\star}$ correspond to another ones. Then we have
\begin{eqnarray}\label{branchingU}
\nonumber \mathscr{U}\mid_{z=z_{\pm}} = \mathscr{U}^{\star}\mid_{z=z_{\pm}} + 2 \pi i k^{\pm}_0 \log(z_{\pm}) + 2 \pi i k^{\pm}_1 \log(a_1a_2a_4a_5z_{\pm})\\ 
\nonumber + 2 \pi i k^{\pm}_2 \log(a_1a_3a_4a_6z_{\pm}) + 2 \pi i k^{\pm}_3 \log(a_2a_3a_5a_6z_{\pm})\\
+ 2 \pi i k^{\pm}_4 \log(-a_1a_2a_3z_{\pm}) + 2 \pi i k^{\pm}_5 \log(-a_1a_5a_6z_{\pm})\\
\nonumber + 2 \pi i k^{\pm}_6 \log(-a_2a_4a_6z_{\pm}) + 2 \pi i k^{\pm}_7 \log(-a_3a_4a_5z_{\pm})\\ 
\nonumber + 4 \pi^2 k^{\pm}_8,
\end{eqnarray}
with some $k_j \in \mathbb{Z}$, $j=\overline{0,8}$.
From the above formula, it follows that
\begin{equation}\label{branchingdUdz}
z_{\pm} \frac{\partial \mathscr{U}}{\partial z}\bigg{|}_{z=z_{\pm}} \log z_{\pm} = z_{\pm} \frac{\partial \mathscr{U}^{\star}}{\partial z}\bigg{|}_{z=z_{\pm}} \log z_{\pm} + 2 \pi i \sum^7_{j=0} k^{\pm}_j \log z_{\pm}. 
\end{equation}
Then, according to formulas (\ref{eqVDefinition}), (\ref{branchingU}) -- (\ref{branchingdUdz}), the following expression holds for the corresponding analytic strata of the function $\mathscr{V}$:
\begin{equation}\label{branchingV}
\mathscr{V} = \mathscr{V}^{\star} - \frac{\pi}{2} \sum^{6}_{j=1} m_j \log a_j + \frac{i \pi^2}{2} m_7,
\end{equation}
where $m_{j} \in \mathbb{Z}$, $j=\overline{1,7}$ and we have used the formula $\log( u v ) = \log u + \log v + 2\pi i k$, $k \in \mathbb{Z}$. Hence, according to (\ref{branchingV}), we compute
\begin{equation*}
a_j \frac{\partial \mathscr{V}}{\partial a_j} = a_j \frac{\partial \mathscr{V}^{\star}}{\partial a_j} - \frac{\pi m_j}{2},
\end{equation*}
for each $j=\overline{1,6}$. The latter implies that
\begin{equation*}
a_j \frac{\partial \mathscr{V}}{\partial a_j} - \frac{i}{4} \log\,e^{-4 i\, a_j\, \frac{\partial \mathscr{V}}{\partial a_j}} = 
a_j \frac{\partial \mathscr{V}^{\star}}{\partial a_j} - \frac{i}{4} \log\,e^{-4 i\, a_j\, \frac{\partial \mathscr{V}^{\star}}{\partial a_j}}  - \frac{\pi m_j}{2},
\end{equation*}
for $j=\overline{1,6}$, since we choose the principal stratum of the logarithm function $\log(\circ)$. Thus, by formula (\ref{eqWDefinition}), 
\begin{eqnarray}
\nonumber
-\mathscr{V} + \mathscr{W} = - \mathscr{V}^{\star} + \mathscr{W}^{\star} + \frac{\pi}{2} \sum^{6}_{j=1} m_j \log a_j - \frac{i \pi^2}{2} m_7 - \frac{\pi}{2} \sum^{6}_{j=1} m_j \log a_j\\
\nonumber 
= - \mathscr{V}^{\star} + \mathscr{W}^{\star} - \frac{i \pi^2}{2} m_7,
\end{eqnarray}
with $m_j\in \mathbb{Z}$, $j=\overline{1,7}$. The proof is completed.
\end{proof}

\bigskip
\textbf{3.2} \textbf{Proof of Theorem~\ref{thmVolume}.} The scheme of our proof is the following: first we show that 
$\frac{\partial}{\partial \theta_k} \mathrm{Vol}\,T = - \frac{\ell_k}{2}$, $k\in\{1,2,3,5,6\}$, $\frac{\partial}{\partial \mu} \mathrm{Vol}\,T = - \frac{\ell}{2}$ 
and second we apply the generalised Schl\"{a}fli formula \cite[Equation~1]{MilnorSchlaefli} to show that the volume function and the one from Theorem~\ref{thmVolume} coincide up to a constant. 
Finally, the remaining constant is determined.

\medskip
Now, let us prove the three statements below:
\begin{itemize}
\item[(i)] $\frac{\partial}{\partial \theta_1} \mathrm{Vol}\,T = -\frac{\ell_1}{2}$,
\item[(ii)] $\frac{\partial}{\partial \theta_k} \mathrm{Vol}\,T = -\frac{\ell_k}{2}$ for $k\in\{2,3,5,6\}$,
\item[(iii)] $\frac{\partial}{\partial \mu} \mathrm{Vol}\,T = -\frac{\ell}{2}$.
\end{itemize}

\medskip
Note that in case (ii) it suffices to show $\frac{\partial}{\partial \theta_2} \mathrm{Vol}\,T = -\frac{\ell_2}{2}$. The statement for another $k\in\{3,5,6\}$ is completely analogous.

\smallskip
Let us show that the equality in case (i) holds. First, we compute 
\begin{equation*}
\frac{\partial}{\partial \theta_1} \left( \mathscr{U} - z\, \frac{\partial \mathscr{U}}{\partial z}\, \log z \right) = i\, a_1 \,\frac{\partial}{\partial a_1} \left( \mathscr{U} - z\, \frac{\partial \mathscr{U}}{\partial z}\, \log z \right) = 
\end{equation*}
\begin{equation*}
= i a_1 \left( \frac{\partial \mathscr{U}}{\partial a_1} - \log z \frac{\partial}{\partial a_1}\left( z \frac{\partial \mathscr{U}}{\partial z} \right) \right)
\end{equation*}

Upon the substitution $z := z_{\pm}$, we see that
\begin{equation*}
\frac{\partial}{\partial a_1}\left( z_{\pm} \frac{\partial \mathscr{U}}{\partial z}(a_1,a_2,a_3,a_4,a_5,a_6,z_{\pm}) \right) = 0,
\end{equation*}
by taking the respective derivative on both sides of the identity 
\begin{equation*}
e^{z_{\pm} \frac{\partial \mathscr{U}}{\partial z}(a_1,a_2,a_3,a_4,a_5,a_6,z_{\pm})} = 1,
\end{equation*}
c.f. the definition of $z_{\pm}$ and formula (\ref{eqZpm}).

Finally, we get
\begin{equation*}
\frac{\partial \mathscr{V}}{\partial \theta_1} = - \frac{a_1}{4} \frac{\partial \mathscr{U}}{\partial a_1}\bigg{\vert}^{z=z_{-}}_{z=z_{+}} 
= -\frac{1}{4} \log \left|\frac{\phi(z_{-}) \psi(z_{+})}{\phi(z_{+}) \psi(z_{-})}\right| + \frac{i \pi}{2} k,
\end{equation*}
for a certain $k \in \mathbb{Z}$, where the functions $\phi(\circ)$ and $\psi(\circ)$ are
\begin{equation*}
\phi(z) = (1+a_1a_2a_3z)(1+a_1a_5a_6z),
\end{equation*}
\begin{equation*}
\psi(z) = (1-a_1a_2a_4a_5z)(1-a_1a_3a_4a_6z).
\end{equation*}

The real part of the above expression is
\begin{equation}\label{eqdVdtheta1}
\Re \frac{\partial \mathscr{V}}{\partial \theta_1} = -\frac{1}{4} \log \left\vert\frac{\phi(z_{-}) \psi(z_{+})}{\phi(z_{+}) \psi(z_{-})}\right\vert.
\end{equation}

Let us set $\Delta = \det G$ and $\delta = \sqrt{\det G}$. We shall show that the expression
\begin{equation}\label{eqE}
\mathscr{E} = \phi(z_{-}) \psi(z_{+}) \left( c_{34} + \delta \frac{a_1 - 1/a_1}{2} \right) - \phi(z_{+}) \psi(z_{-}) \left( c_{34} - \delta \frac{a_1 - 1/a_1}{2} \right)
\end{equation}
is identically zero for all $a_k \in \mathbb{C}$, $k=\overline{1,6}$, which it actually depends on.

In order to perform the computation, the following formulas are used:
\begin{equation*}
z_{-} = \frac{-\hat{q}_1 - 4 \delta}{2 \hat{q}_2},\,\,\, z_{+} = \frac{-\hat{q}_1 + 4 \delta}{2 \hat{q}_2},
\end{equation*}
where $\hat{q}_l = q_l/\prod^{6}_{k=1}a_k$ for $l=1,2$ (c.f. formulas (\ref{eqZpm}) -- (\ref{eqQ1Q2Q3Definition})).

Note that one may consider the expression $\mathscr{E}$ as a rational function of independent variables $a_k$, $k=\overline{1,6}$, $\Delta$ and $\delta$, 
making the computation easier to perform by a software routine \cite{W}. First, we have 
\begin{equation*}
\frac{1}{\delta} \frac{\partial \mathscr{E}}{\partial a_1} = \frac{4 a_1 \mathscr{Y}}{\hat{q}^3_2} \frac{\partial \Delta}{\partial a_1},
\end{equation*}
where $\mathscr{Y} = \mathscr{Y}(a_1,a_2,a_3,a_4,a_5,a_6)$ is a certain technical term explained in Appendix.
Since $\frac{\partial \Delta}{\partial a_1} = 2 \delta \frac{\partial \delta}{\partial a_1}$, the above expression gives us
\begin{equation}\label{eqdEda1}
\frac{\partial \mathscr{E}}{\partial a_1} = \frac{8 a_1 \Delta}{\hat{q}^3_2} \frac{\partial \delta}{\partial a_1} \mathscr{Y}.
\end{equation}

Second, we obtain
\begin{equation}\label{eqdEddelta}
\frac{\partial \mathscr{E}}{\partial \delta} = -\frac{8 a_1 \Delta}{\hat{q}^3_2} \mathscr{Y}.
\end{equation}

Finally, we recall that $\delta$ is a function of $a_k$, $k=\overline{1,6}$, and the total derivative of $\mathscr{E}$ with respect to $a_1$ is
\begin{equation*}
\frac{\partial}{\partial a_1}\bigg( \mathscr{E}\vert_{\delta := \delta(a_1,a_2,a_3,a_4,a_5,a_6)} \bigg) 
= \frac{\partial \mathscr{E}}{\partial a_1} + \frac{\partial \mathscr{E}}{\partial \delta} \frac{\partial \delta}{\partial a_1} = 0,
\end{equation*}
according to equalities (\ref{eqdEda1})--(\ref{eqdEddelta}).

An analogous computation shows that $\frac{\partial}{\partial a_k}\bigg( \mathscr{E} \vert_{\delta := \delta(a_1,a_2,a_3,a_4,a_5,a_6)} \bigg) = 0$ 
for all $k=\overline{1,6}$. Then by setting $a_k=1$, $k=\overline{1,6}$, we get $\Delta = -16$ and so $\delta = 8i$. In this case $\mathscr{E} = 0$. 

Thus the equality $\mathscr{E}\equiv 0$ holds for all $a_k \in \mathbb{C}$, $k=\overline{1,6}$. Together with (\ref{eqdVdtheta1}) it gives
\begin{equation}\label{eqdVdtheta1-2}
\Re \frac{\partial \mathscr{V}}{\partial \theta_1} = -\frac{1}{4} \log \left| \frac{c_{34}-\delta \frac{a_1-a^{-1}_1}{2}}{c_{34}+\delta \frac{a_1-a^{-1}_1}{2}} \right|.
\end{equation}

On the other hand, by formula (\ref{eqGramMinors}), we have
\begin{equation*}
g^{\star}_{34} = -\cosh \ell_1 = \frac{-c_{34}}{\sqrt{c_{33}} \sqrt{c_{44}}}.
\end{equation*}
Here, both $c_{33}$ and $c_{44}$ are positive by Proposition~\ref{propositionVertices}, since the vertices $v_3$ and $v_4$ are proper. The formula above leads to the following equation
\begin{equation*}
e^{2 \ell_1} + 2\, g^{\star}_{34}\, e^{\ell_1} + 1 = 0,
\end{equation*}
the solution to which is determined by
\begin{equation}\label{eqEll1}
e^{2 \ell_1} = \frac{c_{34}+\delta \frac{a_1-a^{-1}_1}{2}}{c_{34}-\delta \frac{a_1-a^{-1}_1}{2}},
\end{equation}
in analogy to \cite[Equation~5.3]{Ushijima}. Thus, equalities (\ref{eqdVdtheta1-2}) -- (\ref{eqEll1}) imply 
\begin{equation*}
\Re \frac{\partial \mathscr{V}}{\partial \theta_1} = -\frac{1}{4} \log\,e^{-2\ell_1} = \frac{\ell_1}{2}.
\end{equation*}
Together with Lemma~\ref{lemmaWTerm}, this implies that claim (i) is satisfied.

\smallskip
As already mentioned, in case (ii) it suffices to prove $\frac{\partial}{\partial \theta_2} \mathrm{Vol}\,T = -\frac{\ell_2}{2}$. 
The statement for another $k\in\{3,5,6\}$ is analogous.

By formula (\ref{eqGramMinors}) we have that
\begin{equation*}
g^{\star}_{24} = i \sinh \ell_2 = \frac{-c_{24}}{\sqrt{c_{22}}\sqrt{c_{44}}}.
\end{equation*}
Since the vertex $v_2$ is ultra-ideal and the vertex $v_4$ is proper, by Proposition~\ref{propositionVertices}, $c_{22}<0$ and $c_{44}>0$. Thus,
\begin{equation*}
\sinh \ell_2 = \frac{c_{24}}{\sqrt{-c_{22}c_{44}}}.
\end{equation*}
The formula above implies 
\begin{equation}\label{eqEll2-1}
e^{2\ell_2} = -\frac{c^2_{24} + 2c_{24}\sqrt{c^2_{24}-c_{22}c_{44}}+c^2_{24}-c_{22}c_{44}}{c_{22}c_{44}}
\end{equation}
By applying Jacobi's theorem \cite[Th\'eor\`eme 2.5.2]{Prasolov} to the Gram matrix $G$, we have
\begin{equation}\label{eqJacobiEll2}
c^2_{24} - c_{22}c_{44} = \Delta \left( \frac{a_2 - a^{-1}_2}{2} \right)^2.
\end{equation}
Combining (\ref{eqEll2-1}) -- (\ref{eqJacobiEll2}) together, it follows that
\begin{equation}\label{eqEll2-2}
- e^{2 \ell_2} = \frac{c_{24} + \delta \frac{a_2 - a^{-1}_2}{2}}{c_{24} - \delta \frac{a_2 - a^{-1}_2}{2}}.
\end{equation}

By analogy with (\ref{eqdVdtheta1}), we get the formula
\begin{equation}\label{eqdVdtheta2}
\Re \frac{\partial \mathscr{V}}{\partial \theta_2} = - \frac{1}{4} \log \left| \frac{\phi(z_{-}) \psi(z_{+})}{\phi(z_{+}) \psi(z_{-})} \right|,
\end{equation}
where
\begin{equation*}
\phi(z) = (1 + a_1 a_2 a_3 z)(1 + a_2 a_4 a_6 z),
\end{equation*}
\begin{equation*}
\psi(z) = (1 - a_1 a_2 a_4 a_5 z)(1 - a_2 a_3 a_5 a_6 z).
\end{equation*}

Similar to case (i), the following relation holds:
\begin{equation*}
\frac{\phi(z_{-}) \psi(z_{+})}{\phi(z_{+}) \psi(z_{-})} = \frac{c_{24} - \delta \frac{a_2-a^{-1}_2}{2}}{c_{24} + \delta \frac{a_2-a^{-1}_2}{2}}.
\end{equation*}

Then formulas (\ref{eqEll2-2}) -- (\ref{eqdVdtheta2}) yield
\begin{equation*}
\Re \frac{\partial \mathscr{V}}{\partial \theta_2} = -\frac{1}{4} \log \left| \frac{-1}{e^{2\ell_2}} \right| = \frac{\ell_2}{2}.
\end{equation*}
The first equality of case (ii) now follows. 
Carrying out an analogous computation for $\Re \frac{\partial \mathscr{V}}{\partial \theta_k}$, $k=3,5$, we obtain
\begin{equation*}
\Re \frac{\partial \mathscr{V}}{\partial \theta_3} =  \frac{\ell_3}{2} \mbox{ and } \Re \frac{\partial \mathscr{V}}{\partial \theta_5} =  \frac{\ell_5}{2}.
\end{equation*}
Thus, all equalities of case (ii) hold.

As before, by formula (\ref{eqGramMinors}), we obtain that
\begin{equation}\label{eqGStar12}
- g^{\star}_{12} = \cos \mu = \frac{c_{12}}{\sqrt{c_{11}} \sqrt{c_{22}}}.
\end{equation}
Since both vertices $v_1$ and $v_2$ are ultra-ideal, by Proposition~\ref{propositionVertices}, the cofactors $c_{11}$ and $c_{22}$ are negative. Then (\ref{eqGStar12}) implies the equation
\begin{equation*}
e^{2 i \mu} + \frac{2 c_{12} e^{i \mu}}{\sqrt{c_{11}c_{22}}} + 1 = 0.
\end{equation*}
Without loss of generality, the solution we choose is 
\begin{equation}\label{eqMu}
e^{i \mu} = \frac{-c_{12}+\sqrt{c^2_{12}-c_{11}c_{22}}}{\sqrt{c_{11} c_{22}}}.
\end{equation}

By squaring (\ref{eqMu}) and by applying Jacobi's theorem to the corresponding cofactors of the matrix $G$, the formula below follows:
\begin{equation}\label{eqJacobiMu}
e^{2 i \mu} = \frac{c_{12} - \delta \frac{a_4 - a^{-1}_4}{2}}{c_{12} + \delta \frac{a_4 - a^{-1}_4}{2}}.
\end{equation}

On the other hand, a direct computation shows that
\begin{equation}\label{eqdVdl}
\Re \frac{\partial \mathscr{V}}{\partial \ell} = \frac{i}{4}\, \log\left( \frac{\phi(z_{-}) \psi(z_{+})}{\phi(z_{+}) \psi(z_{-})} \right),
\end{equation}
where
\begin{equation*}
\phi(z) = (1 + a_3 a_4 a_5 z)(1 + a_2 a_4 a_6 z),
\end{equation*}
\begin{equation*}
\psi(z) = (1 - a_1 a_2 a_4 a_5 z)(1 - a_1 a_3 a_4 a_6 z).
\end{equation*}

Similar to cases (i) and (ii), we have the relation
\begin{equation*}
\frac{\phi(z_{-}) \psi(z_{+})}{\phi(z_{+}) \psi(z_{-})} = \frac{c_{12} - \delta \frac{a_4 - a^{-1}_4}{2}}{c_{12} + \delta \frac{a_4 - a^{-1}_4}{2}},
\end{equation*}
which together with (\ref{eqJacobiMu}) -- (\ref{eqdVdl}) yields
\begin{equation*}
\frac{\partial \mathscr{V}}{\partial \ell} = \frac{i}{4} \log e^{2 i \mu} = -\frac{\mu}{2} - \frac{\pi}{2}\,k, \mbox{  } k\in \mathbb{Z}.
\end{equation*}
Since, $0 \leq \mu \leq \pi$, we choose $k=0$ and so $\frac{\partial \mathscr{V}}{\partial \ell} = -\frac{\mu}{2}$. The latter formula implies the equality of case (iii).

Now, by the generalised Schl\"{a}fli formula \cite{MilnorSchlaefli},
\begin{equation*}
\mathrm{d} \mathrm{Vol}\,T = -\frac{1}{2}\sum_{k\in\{1,2,3,5,6\}} \ell_k \mathrm{d}\theta_k - \frac{\ell}{2} \mathrm{d}\mu.
\end{equation*}

Together with the equalities of cases (i)-(iii) it yields
\begin{equation}\label{eqVolume}
\mathrm{Vol}\,T = \Re \left(-\mathscr{V} + \mathscr{W} - \frac{\mu \ell}{2} \right) + \mathscr{C},
\end{equation}
where $\mathscr{C} \in \mathbb{R}$ is a constant.

Finally, we prove that $\mathscr{C} = 0$, and the theorem follows. Passing to the limit $\theta_k \rightarrow \frac{\pi}{2}$, $k=\overline{1,6}$, 
the generalised hyperbolic tetrahedron $T$ shrinks to a point, since geometrically it tends to a Euclidean prism. 
Thus, we have $\mu \rightarrow \frac{\pi}{2}$ and $\ell \rightarrow 0$. By setting the limiting values above, 
we obtain that $a_k = i$, $k\in\{1,2,3,5,6\}$, $a_4 = 1$. Then $z_{-} = z_{+} = 1$ by equality (\ref{eqZpm}), 
and hence $\mathscr{V} = 0$. Since the dilogarithm function does not branch at $\pm 1$, $\pm i$, we have $\mathscr{W} = 0$.
Since $T$ shrinks to a point, $\mathrm{Vol}\,T \rightarrow 0$, that implies $\mathscr{C} = 0$ by means of (\ref{eqVolume}) and the proof is completed.~$\square$

\begin{table}[ht]
\begin{center}
\begin{tabular}{|c|c|c|c|}
\hline  $(\theta_1,\theta_2,\theta_3,\theta_5,\theta_6)$& $(\ell, \mu)$& Volume& Reference\\
\hline  $(\frac{\pi}{2}, \frac{\pi}{3}, \frac{\pi}{3}, \frac{\pi}{2}, \frac{\pi}{2})$&  $(0, 0)$&  $0.5019205$&  \cite{DK}\\
\hline  $(\frac{\pi}{2}, \frac{\pi}{3}, \frac{\pi}{3}, \frac{\pi}{2}, \frac{\pi}{2})$&  $(0.3164870, \frac{\pi}{4})$&  $0.4438311$&  \cite{DK}\\ 
\hline  $(\frac{\pi}{2}, \frac{\pi}{3}, \frac{\pi}{4}, \frac{\pi}{2}, \frac{\pi}{2})$&  $(0, 0)$&  $0.6477716$&  \cite{DK}\\ 
\hline  $(\frac{\pi}{2}, \frac{\pi}{3}, \frac{\pi}{4}, \frac{\pi}{2}, \frac{\pi}{2})$&  $(0.3664289, \frac{\pi}{4})$&  $0.5805842$&  \cite{DK}\\
\hline  $(\frac{\pi}{2}, \frac{\pi}{3}, \frac{\pi}{5}, \frac{\pi}{2}, \frac{\pi}{2})$&  $(0, 0)$&  $0.7466394$&  \cite{DK}\\ 
\hline  $(\frac{\pi}{2}, \frac{\pi}{3}, \frac{\pi}{5}, \frac{\pi}{2}, \frac{\pi}{2})$&  $(0.3835985, \frac{\pi}{4})$&  $0.6764612$&  \cite{DK}\\
\hline  $(\frac{\pi}{2}, 0, \frac{\pi}{2}, 0, \frac{\pi}{2})$&  $(0, 0)$&  $0.9159659$&  \cite{Sz}\\
\hline  $(\frac{\pi}{2}, \frac{\pi}{3}, \frac{\pi}{2}, \frac{\pi}{3}, \frac{\pi}{2})$&  $(0.5435350, \frac{\pi}{3})$&  $0.3244234$&  \cite{Sz}\\
\hline  $(\frac{\pi}{2}, \frac{\pi}{4}, \frac{\pi}{2}, \frac{\pi}{4}, \frac{\pi}{2})$&  $(0.5306375, \frac{\pi}{4})$&  $0.5382759^{\flat}$&  \cite{Sz}\\
\hline  $(\frac{\pi}{2}, \frac{\pi}{5}, \frac{\pi}{2}, \frac{\pi}{5}, \frac{\pi}{2})$&  $(0.4812118, \frac{\pi}{5})$&  $0.6580815$&  \cite{Sz}\\
\hline  $(\frac{\pi}{2}, \frac{\pi}{6}, \frac{\pi}{2}, \frac{\pi}{6}, \frac{\pi}{2})$&  $(0.4312773, \frac{\pi}{6})$&  $0.7299264$&  \cite{Sz}\\
\hline  $(\frac{\pi}{2}, \frac{\pi}{10}, \frac{\pi}{2}, \frac{\pi}{10}, \frac{\pi}{2})$&  $(0.2910082, \frac{\pi}{10})$&  $0.8447678$&  \cite{Sz}\\
\hline 
\end{tabular} 
\\
\smallskip
\begin{small}
${}^{\flat}$ this value is misprinted in \cite[Equation~4.11]{Sz}
\end{small}
\end{center}
\caption{Some numerically computed volume values} \label{tabularVolumes}
\end{table}

\section{Numeric computations}

In Table~\ref{tabularVolumes} we have collected several volumes of prism truncated tetrahedron computed by using Theorem~\ref{thmVolume} and before in \cite{DK, Sz}. The altitude of a prism truncated tetrahedron is computed from its dihedral angles, as the remark after Theorem~\ref{thmVolume} states. All numeric computations are carried out using the software routine ``Mathematica'' \cite{W}.

\section*{Appendix}

\medskip
\textbf{The term $\mathscr{Y}$ from Section~3.2}

Let us first recall of the expressions $q_k$, $k=0,1,2$, that are polynomials in the variables $a_k$, $k=\overline{1,6}$ defined by formula (\ref{eqQ1Q2Q3Definition}) 
and the expressions $\hat{q}_l$, $l=1,2$, defined in the proof of Theorem~\ref{thmVolume} by $\hat{q}_l = q_l/\prod^{6}_{k=1}a_k$, $l=1,2$. 
Then, the following lemma holds concerning the technical term $\mathscr{Y}$ mentioned above, that actually equals
\begin{eqnarray}\label{eqYTerm}
\nonumber \mathscr{Y} = a_1 a_2^2 a_3 a_4^2 a_5 + a_1^2 a_2 a_3^2 a_4^2 a_5 + a_2^3 a_3^2 a_4^2 a_5 + a_1 a_2^2 a_3^3 a_4^2 a_5 + a_1 a_2^2 a_4 a_5^2\\ 
\nonumber + a_1^2 a_2 a_3 a_4 a_5^2 + a_2^3 a_3 a_4 a_5^2 + a_1 a_2^2 a_3^2 a_4 a_5^2 + a_1^2 a_2^2 a_3 a_4^2 a_6 + a_1 a_2 a_3^2 a_4^2 a_6\\ 
\nonumber + a_1 a_2^3 a_3^2 a_4^2 a_6 + a_2^2 a_3^3 a_4^2 a_6 + a_1^2 a_2^2 a_4 a_5 a_6 + a_1 a_2 a_3 a_4 a_5 a_6 + a_1^3 a_2 a_3 a_4 a_5 a_6\\  
\nonumber + a_1 a_2^3 a_3 a_4 a_5 a_6 + a_1^2 a_3^2 a_4 a_5 a_6 + 2 a_2^2 a_3^2 a_4 a_5 a_6 - a_1^2 a_2^2 a_3^2 a_4 a_5 a_6 + a_1^4 a_2^2 a_3^2 a_4 a_5 a_6\\ 
\nonumber + a_1 a_2 a_3^3 a_4 a_5 a_6 + a_1 a_2 a_3 a_4^3 a_5 a_6 - a_1^3 a_2 a_3 a_4^3 a_5 a_6 + a_2^2 a_3^2 a_4^3 a_5 a_6 - a_1^2 a_2^2 a_3^2 a_4^3 a_5 a_6\\ 
\nonumber + a_2^2 a_3 a_5^2 a_6 - a_1^2 a_2^2 a_3 a_5^2 a_6 + a_1 a_2 a_3^2 a_5^2 a_6 - a_1^3 a_2 a_3^2 a_5^2 a_6 + a_1 a_2 a_4^2 a_5^2 a_6\\
\nonumber + a_1^2 a_3 a_4^2 a_5^2 a_6 + 2 a_2^2 a_3 a_4^2 a_5^2 a_6 - a_1^2 a_2^2 a_3 a_4^2 a_5^2 a_6 + a_1^4 a_2^2 a_3 a_4^2 a_5^2 a_6 + a_1 a_2 a_3^2 a_4^2 a_5^2 a_6\\
\nonumber + a_1^3 a_2 a_3^2 a_4^2 a_5^2 a_6 + a_1 a_2^3 a_3^2 a_4^2 a_5^2 a_6 + a_1^2 a_2^2 a_3^3 a_4^2 a_5^2 a_6 + a_2^2 a_4 a_5^3 a_6 + a_1 a_2 a_3 a_4 a_5^3 a_6\\
\nonumber + a_1 a_2^3 a_3 a_4 a_5^3 a_6 + a_1^2 a_2^2 a_3^2 a_4 a_5^3 a_6 + a_1^2 a_2 a_3 a_4 a_6^2 + a_1 a_3^2 a_4 a_6^2 + a_1 a_2^2 a_3^2 a_4 a_6^2\\
\nonumber + a_2 a_3^3 a_4 a_6^2 + a_1 a_2^2 a_3 a_5 a_6^2 - a_1^3 a_2^2 a_3 a_5 a_6^2 + a_2 a_3^2 a_5 a_6^2 - a_1^2 a_2 a_3^2 a_5 a_6^2\\
\nonumber + a_1^2 a_2 a_4^2 a_5 a_6^2 + a_1 a_3 a_4^2 a_5 a_6^2 + a_1 a_2^2 a_3 a_4^2 a_5 a_6^2 + a_1^3 a_2^2 a_3 a_4^2 a_5 a_6^2 + 2 a_2 a_3^2 a_4^2 a_5 a_6^2\\
\nonumber - a_1^2 a_2 a_3^2 a_4^2 a_5 a_6^2 + a_1^4 a_2 a_3^2 a_4^2 a_5 a_6^2 + a_1^2 a_2^3 a_3^2 a_4^2 a_5 a_6^2 + a_1 a_2^2 a_3^3 a_4^2 a_5 a_6^2 + a_1 a_2^2 a_4 a_5^2 a_6^2\\
\nonumber + 2 a_2 a_3 a_4 a_5^2 a_6^2 - a_1^2 a_2 a_3 a_4 a_5^2 a_6^2 + a_1^4 a_2 a_3 a_4 a_5^2 a_6^2 + a_1^2 a_2^3 a_3 a_4 a_5^2 a_6^2 + a_1 a_3^2 a_4 a_5^2 a_6^2\\
\nonumber + a_1 a_2^2 a_3^2 a_4 a_5^2 a_6^2 + a_1^3 a_2^2 a_3^2 a_4 a_5^2 a_6^2 + a_1^2 a_2 a_3^3 a_4 a_5^2 a_6^2 + a_2 a_3 a_4^3 a_5^2 a_6^2 - a_1^2 a_2 a_3 a_4^3 a_5^2 a_6^2\\
\nonumber + a_1 a_2^2 a_3^2 a_4^3 a_5^2 a_6^2 - a_1^3 a_2^2 a_3^2 a_4^3 a_5^2 a_6^2 + a_2 a_4^2 a_5^3 a_6^2 + a_1 a_3 a_4^2 a_5^3 a_6^2 + a_1 a_2^2 a_3 a_4^2 a_5^3 a_6^2\\
\nonumber + a_1^2 a_2 a_3^2 a_4^2 a_5^3 a_6^2 + a_1 a_2 a_3 a_4 a_5 a_6^3 + a_3^2 a_4 a_5 a_6^3 + a_1^2 a_2^2 a_3^2 a_4 a_5 a_6^3 + a_1 a_2 a_3^3 a_4 a_5 a_6^3\\
\nonumber + a_1 a_2 a_4^2 a_5^2 a_6^3 + a_3 a_4^2 a_5^2 a_6^3 + a_1^2 a_2^2 a_3 a_4^2 a_5^2 a_6^3 + a_1 a_2 a_3^2 a_4^2 a_5^2 a_6^3.
\end{eqnarray}

\begin{lemma}\label{lemmaTechnicalTerm}
The above expression $\mathscr{Y}$ is not a polynomial neither in the variables $q_0$, $q_1$, $q_2$, nor in the variables $q_0$, $q_1$, $\hat{q}_2$.
\end{lemma}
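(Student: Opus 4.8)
The plan is to find one specialisation of the variables that forces $q_0,q_1,q_2$ (and therefore $\hat q_2$) to become symmetric functions of two remaining variables, while $\mathscr Y$ stays asymmetric; a function of the $q$'s must inherit their symmetry, so this rules out $\mathscr Y$ being such a function. First I would set $a_3=a_4=a_5=a_6=1$ and keep $a_1,a_2$ free. Reading off the defining formulas, $q_1=0$ because every factor $a_k-1/a_k$ appearing in (\ref{eqQ1Q2Q3Definition}) with $k\in\{4,5,6\}$ vanishes, while $q_0=2(1+a_1)(1+a_2)$ and $q_2=2\,a_1a_2(1+a_1)(1+a_2)$. Since $\prod_{k=1}^{6}a_k=a_1a_2$ on this slice, also $\hat q_2=q_2/(a_1a_2)=2(1+a_1)(1+a_2)$. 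In particular $q_0,q_1,q_2,\hat q_2$ are all invariant under the transposition $a_1\leftrightarrow a_2$.

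Next I would argue by contradiction. If $\mathscr Y=F(q_0,q_1,q_2)$ for some $F$, then restricting to the slice gives $\mathscr Y|_{a_3=\dots=a_6=1}=F\bigl(2(1+a_1)(1+a_2),\,0,\,2a_1a_2(1+a_1)(1+a_2)\bigr)$, which is symmetric under $a_1\leftrightarrow a_2$ because each argument of $F$ is; the identical reasoning with $\hat q_2$ replacing $q_2$ covers the second assertion, as $\hat q_2$ is equally symmetric on the slice. (The argument uses nothing about $F$ beyond its being a function, so it in fact shows $\mathscr Y$ is not even a function of these data.) Hence both claims reduce to the single statement that $\mathscr Y|_{a_3=\dots=a_6=1}$ is not symmetric in $a_1$ and $a_2$.

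Finally I would read this asymmetry straight off the explicit expression (\ref{eqYTerm}). Setting $a_3=a_4=a_5=a_6=1$ leaves the $a_1$- and $a_2$-degrees of every monomial unchanged, so it suffices to compare the surviving top powers. The expression $\mathscr Y$ contains monomials divisible by $a_1^4$ (namely $a_1^4a_2^2a_3^2a_4a_5a_6$, $a_1^4a_2^2a_3a_4^2a_5^2a_6$, $a_1^4a_2a_3^2a_4^2a_5a_6^2$ and $a_1^4a_2a_3a_4a_5^2a_6^2$), which on the slice contribute $2\,a_1^4a_2^2+2\,a_1^4a_2$ and hence do not cancel, so $\deg_{a_1}\mathscr Y|_{\mathrm{slice}}=4$; on the other hand no monomial of $\mathscr Y$ is divisible by $a_2^4$, whence $\deg_{a_2}\mathscr Y|_{\mathrm{slice}}\le 3<4$. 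This strict inequality of partial degrees makes $\mathscr Y|_{\mathrm{slice}}$ asymmetric in $a_1,a_2$, the required contradiction.

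I expect the only point needing care to be the last one: after the specialisation one must be sure that no cancellation destroys the asymmetric leading behaviour. Anchoring the argument on the unequal one-variable degrees $\deg_{a_1}=4$ and $\deg_{a_2}\le 3$ — equivalently, on the presence of $a_1^4$-monomials against the total absence of any $a_2^4$-monomial — sidesteps this, since a single surviving coefficient in the top $a_1$-degree and the outright nonexistence of top $a_2$-degree terms are both stable under the substitution. Conceptually, the slice is chosen precisely so that the tetrahedral symmetry $(a_1a_2)(a_4a_5)$, which fixes $q_0,q_1,q_2$ but not $\mathscr Y$ (the latter being tied to the single edge $\ell_1$), degenerates to the bare swap $a_1\leftrightarrow a_2$.
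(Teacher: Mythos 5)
Your proof is correct, but it takes a genuinely different route from the paper's. The paper specialises $a_1=a_6:=0$, which kills $q_1$ and $q_2$ outright and reduces $q_0$ to $1+a_3a_4a_5$; it then observes that $\mathscr{Y}|_{a_1=a_6:=0}=a_2^3\,a_3a_4a_5(a_3a_4+a_5)$ still carries a cubic dependence on $a_2$ that cannot be produced from $q_0$ alone (nor, because of the leftover $a_2^2$ factor against $\hat q_2|_{a_1=a_6:=0}=a_2(a_3a_4+a_5)$, from $q_0$ and $\hat q_2$). You instead specialise $a_3=\dots=a_6:=1$, note that $q_0,q_1,q_2,\hat q_2$ all become symmetric under $a_1\leftrightarrow a_2$ on that slice (your computations $q_0=2(1+a_1)(1+a_2)$, $q_1=0$, $q_2=2a_1a_2(1+a_1)(1+a_2)$, $\hat q_2=2(1+a_1)(1+a_2)$ check out), and then break the symmetry by comparing partial degrees: the four $a_1^4$-monomials of (\ref{eqYTerm}) all have coefficient $+1$ and restrict to $2a_1^4a_2^2+2a_1^4a_2\neq 0$, while no monomial of $\mathscr{Y}$ is divisible by $a_2^4$, so $\deg_{a_1}=4>3\geq\deg_{a_2}$ on the slice. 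Both arguments are short and complete; yours is slightly stronger in that it rules out $\mathscr{Y}$ being \emph{any} function of the $q$'s (not merely a polynomial), and it is robust against cancellation since it rests on the unequal one-variable degrees rather than on an explicit factorisation, whereas the paper's choice of slice makes the non-expressibility visible in a single line but requires a small extra divisibility remark to finish the $\hat q_2$ case.
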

\begin{proof}
By setting $a_1 = a_6 := 0$, we get $q_0 = 1 + a_3 a_4 a_5$, $q_1 = q_2 = 0$ and
\begin{equation*}
\mathscr{Y}{|}_{a_1 = a_6 := 0} = a_2^3 a_3 a_4 a_5 (a_3 a_4 + a_5) =  a_2^3\, (q_0{|}_{a_1 = a_6 := 0} - 1)\, (a_3 a_4 + a_5).
\end{equation*}
As well, we have $\hat{q}_2 = a_2 (a_3 a_4 + a_5)$ and
\begin{equation*}
\mathscr{Y}{|}_{a_1 = a_6 := 0} = a_2^2\, (q_0{|}_{a_1 = a_6 := 0} - 1)\, \hat{q}_2{|}_{a_1 = a_6 := 0}.
\end{equation*}
The former equality proves that $\mathscr{Y}$ is not a polynomial in the variables $q_0$, $q_1$, $q_2$. The latter shows that $\mathscr{Y}$ is not a polynomial in $q_0$, $q_1$, $\hat{q}_2$ either.
\end{proof}

\bigskip

{\it
Alexander Kolpakov

Department of Mathematics

University of Fribourg

chemin du Mus\'ee 23

CH-1700 Fribourg, Switzerland

kolpakov.alexander(at)gmail.com}

\medskip
{\it
Jun Murakami

Department of Mathematics

Faculty of Science and Engineering

Waseda University

3-4-1 Okubo Shinjuku-ku 

169-8555 Tokyo, Japan

murakami(at)waseda.jp}

\end{document}